\def\today{\ifcase\month\or
  January\or February\or March\or April\or May\or June\or
  July\or August\or September\or October\or November\or December\fi
  \space\number\day, \number\year}
\newtheorem{theorem}{Theorem}
\newtheorem{conjecture}{Conjecture}
\newtheorem{lemma}[theorem]{Lemma}
\newtheorem{corollary}[theorem]{Corollary}
\theoremstyle{definition}
\theoremstyle{remark}
\newcommand{\mc}{\mathcal}
\newcommand{\F}{\mc{F}}
\newcommand{\G}{\mc{G}}
\newcommand{\K}{\mc{K}}
\newcommand{\C}{\mathbb{C}}
\newcommand{\R}{\mathbb{R}}
\newcommand{\Q}{\mc{Q}}
\newcommand{\Z}{\mathbb{Z}}
\newcommand{\p}{\varphi}
\newcommand{\dz}{\text{\rm d}z}
\newcommand{\dt}{\text{\rm d}t}
\newcommand{\dw}{\text{\rm d}w}
\newcommand{\dx}{\text{\rm d}x}
\newcommand{\dy}{\text{\rm d}y}
\renewcommand{\d}{\text{\rm d}}
\newcommand{\calc}[1]{\begin{align*}#1 \end{align*}}
\newcommand{\lcalc}[1]{\begin{align}\begin{split}#1\end{split} \end{align}}
\newcommand{\ov}{\overline}
\newcommand{\re}{{\rm Re}\,}
\newcommand{\ga}{\gamma}
\newcommand{\Dist}{{\rm Dist}}
\newcommand{\1}{\mathbf{1}}
\newcommand{\sg}{{\rm SG}}
\newcommand{\la}{\langle}
\newcommand{\ra}{\rangle}
\begin{document}


\title[]{A Sharpened Strichartz Inequality for Radial Functions}
\author[Gon\c{c}alves]{Felipe Gon\c{c}alves}
\date{\today}
\subjclass[2010]{42B37, 41A44, 33C45}
\keywords{Strichartz estimates, sharp inequality, Schr\"odinger equation, Laguerre polynomials}
\address{\noindent University of Alberta, Mathematical and Statistical Sciences, CAB 632, Edmonton, Canada T6G 2G1}
\email{felipe.goncalves@ualberta.ca}
\urladdr{sites.ualberta.ca/~goncalve}
\allowdisplaybreaks


\begin{abstract}
We prove a new sharpened version of the Strichartz inequality for radial solutions of the Schr\"odinger equation in two dimensions.  We establish an improved upper bound for functions that nearly extremize the inequality, with a negative second term that measures the distance from the initial data to Gaussians.
\end{abstract}


\maketitle


\section{Introduction}
Let $2\leq p,q \leq \infty$ satisfy $\frac{d}{p}+\frac{2}{q} = \frac{d}{2}$ and $(p,q,d)\neq (\infty,2,2)$. The Strichartz estimate for the Schr\"odinger equation (see \cite[Theorem 2.3]{Tao}) states that there exists a constant $C>0$ such that
\lcalc{\label{St-est}
\|\|u(x,t)\|_{L^p(\R^d,\dx)}\|_{L^q(\R,\dt)} \leq C \|f(x)\|_{L^2(\R^d,\dx)}
}
for all $f\in L^2(\R^d)$, where $u(x,t)$ is the solution of the Schr\"odinger equation in $\R^d$
\begin{equation*}
{\rm (SE)}\left\{
\begin{array}{lc}
\displaystyle \partial_t u(x,t) = i\Delta u(x,t), \\ 
u(x,0)=f(x).
\end{array}
\right.
\end{equation*}
Letting
\lcalc{\label{C-const}
C(p,q,d)=\sup_{f\neq 0} \frac{\|\|u(x,t)\|_{L^p(\R^d,\dx)}\|_{L^q(\R,\dt)} }{\|f(x)\|_{L^2(\R^d,\dx)}},
}
we say that a function $f\neq 0$ maximizes \eqref{St-est} if it realizes the supremum at \eqref{C-const}. It is conjectured that a function $f$ maximizes \eqref{St-est} if and only if it has the form $Ae^{-B|x|^2 + v\cdot x}$, where $A,B\in\C$, $\re B>0$ and $v\in\C^d$. If that is the case, it is then easy to show that $C(p,q,d)=[p^{-1/2p}2^{1/p-1/4}]^d$. 
\smallskip

This long-standing conjecture still is largely open, but for some few even exponents (where some extra structure emerges) it is known to be true. The first to prove this conjecture for $(p,q,d)\in\{(6,6,1);(4,4,2)\}$ was Foschi \cite{Fo}. Hundertmark and Zharnitsky \cite{HZ} also gave an alternative proof for these two cases. Later on, Carneiro \cite{Ca} and Bennett, Bez, Carbery and Hundertmark \cite{BBCH} gave other alternative proofs for these cases, including in addition the new case $(p,q,d)=(4,8,1)$. 

The present author also gave recently a new proof for all these exponents in \cite{Gon}, where the novelty was the use of orthogonal polynomial expansions to transform the desired sharp estimate into a series of finite-dimensional sharp inequalities for matrices that can be solved by generating functions techniques. It was already noticed in \cite[Appendix]{Gon} that the case $(p,q,d)=(4,4,2)$ is special in some way (for instance, the matrices that appear here are doubly stochastic) and that something more could be said in this situation.

It is worth mentioning that orthogonal polynomials have been used to produce sharp estimates in Harmonic Analysis in several instances.
The first most notorious and original use was in Beckner's thesis \cite{Be}, where he proved the sharp Hausdorff-Young inequality using Hermite polynomial expansions. More recently, Foschi \cite{F} used spherical harmonics and Gegenbauer polynomials in his proof of the sharp Tomas-Stein adjoint Fourier restriction inequality for the sphere. Later on, this strategy was extended by Carneiro and Oliveira e Silva \cite{CO} for other dimensions and even exponents.  Smoothing estimates for a general class of Schr\"odinger operators were also produced in \cite{BS} using Gegenbauer polynomials. 

Inspired by the work of Christ \cite{Jesus}, in the present paper we prove a sharpened version of the Strichartz inequality  for radial functions and exponents $(p,q,d)=(4,4,2)$ by performing a near-extremizer analysis that allow us to relate the distance from an extremizer (a Gaussian) to the inequality itself.

\subsection{Main results}
We will be only focused on the sharp Strichartz estimate \eqref{St-est} with exponents $(p,q,d)=(4,4,2)$ and for this reason we state it explicitly: If $u(x,t)$ solves {\rm (SE)} with initial data $f \in L^2(\R^2)$ then
\lcalc{\label{St-R2}
\|u\|_{L^4(\R^2\times\R)}\leq \frac{1}{\sqrt{2}} \|f\|_{L^2(\R^2)},
}
and equality is attained if and only if $f(x)=Ae^{-B|x|^2 + v\cdot x}$, where $A,B\in\C$, $\re B>0$ and $v\in\C^2$.

We say that a function $x\in\R^d \mapsto f(x)$ is radial if it depends only on $|x|$, where $|x|$ is the Euclidean norm of $x$. We denote by $L^2_{rad}(\R^d)$ the space of radial functions in $L^2(\R^d)$. Also, for a function $g\in L^2(\R^d)$ and a family of functions $\F\subset L^2(\R^d)$ we define
$$
\Dist_{L^2(\R^d)}(g,\F) = \inf \{\|g-f\|_{L^2(\R^d)}:f\in \F\}.
$$
We simply write $\Dist(g,\F)$ when it is clear by the context that this supposed to be calculated in $L^2(\R^d)$. We now state the main result of this paper, which is a sharpening of \eqref{St-R2}.

\begin{theorem}\label{radial-thm-R4}
Let $f\in L^2_{rad}(\R^2)$ and let $u(x,t)$ solve {\rm (SE)} with initial data $f$. There exists an universal constant $\gamma>0$ such that
\calc{
\|u\|_{L^4(\R^2\times\R)}\leq \frac{1}{\sqrt{2}} \|f\|_{L^2(\R^2)}\bigg[1-\gamma \frac{\Dist^2_{L^2(\R^4)}\left(f\otimes f,L^2_{rad}(\R^4)\right)}{\|f\|^4_{L^2(\R^2)}}\bigg]^{1/4},
}
where $f\otimes f(x,y)=f(x)f(y)$ for $(x,y)\in\R^2\times \R^2$. 
\end{theorem}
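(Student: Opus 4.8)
The plan is to exploit the well-known ``tensor power'' reformulation: for $(p,q,d)=(4,4,2)$ one has $\|u\|_{L^4(\R^2\times\R)}^2 = c\,\|e^{it\Delta}(f\otimes f)\|_{L^2(\R^4\times\R)}$ (up to a precise constant), and the identity $\frac{d}{p}+\frac{2}{q}=\frac d2$ with $p=q=4$ makes the right-hand side an $L^2$-based quantity, so that the square of the Strichartz quotient becomes a \emph{bilinear} form in $f\otimes f$. Passing to radial functions and expanding in the Laguerre polynomial basis (as in \cite{Gon}), the sharp inequality $\|u\|_4 \le 2^{-1/2}\|f\|_2$ is equivalent to a family of sharp finite-dimensional inequalities $a^{*}M_k a \le \|a\|^2$ for a sequence of doubly stochastic matrices $M_k$, where $a$ ranges over the Laguerre coefficient vector of $f\otimes f$ (restricted to the appropriate diagonal block). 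The extremizers (Gaussians) correspond precisely to the coefficient vector being supported on the top eigenvector of each $M_k$, namely the constant vector. First I would carry out this reduction carefully, keeping track of the correspondence between $\Dist_{L^2(\R^4)}(f\otimes f, L^2_{rad}(\R^4))$ and the $\ell^2$-mass of the Laguerre coefficients of $f\otimes f$ lying \emph{off} the span of the Gaussians.

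The heart of the argument is then a spectral gap statement for the doubly stochastic matrices $M_k$: I want to show there is a universal $\delta>0$ such that the second-largest eigenvalue of every $M_k$ is $\le 1-\delta$. Given such a gap, one gets $a^{*}M_k a \le \|a\|^2 - \delta\,\|a_{\perp}\|^2$, where $a_\perp$ is the projection of $a$ orthogonal to the constant (Gaussian) direction; summing over $k$ and translating back yields
\[
\|u\|_{L^4(\R^2\times\R)}^4 \le \tfrac14\|f\|_{L^2(\R^2)}^4\Big(1 - 2\delta\,\tfrac{\Dist^2_{L^2(\R^4)}(f\otimes f,\,\mathrm{Gaussians})}{\|f\|^4_{L^2(\R^2)}}\Big),
\]
and then one uses that the distance to the \emph{radial} subspace $L^2_{rad}(\R^4)$ is bounded by (a constant times) the distance to the Gaussians — actually it is smaller, which goes the right way — together with the elementary inequality $(1-s)\le(1-s/4)^4$ for small $s\ge0$ to recover the stated fourth-root form with $\gamma = \delta/2$ (after shrinking $\gamma$ so the bracket stays nonnegative and the inequality is trivially true when the bracket would be negative). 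The explicit generating-function identities from \cite{Gon} that diagonalize or at least control $M_k$ should give the eigenvalues (or at least a clean upper bound on the non-principal ones) in closed form, since the matrices here are the especially structured doubly stochastic ones singled out in \cite[Appendix]{Gon}.

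The main obstacle I anticipate is establishing the \emph{uniform} spectral gap $\sup_k \lambda_2(M_k) \le 1-\delta < 1$: a priori the matrices $M_k$ grow in size with $k$, and one must rule out $\lambda_2(M_k)\to 1$. I would attack this by getting an exact formula for the entries of $M_k$ from the Laguerre generating function, recognizing $M_k$ as (conjugate to) a Jacobi/Hahn-type operator or as a compression of an explicit integral operator, and then either computing its spectrum outright or bounding $\lambda_2$ via a test-function / Schur-type argument. A secondary technical point is the precise passage between $\Dist(f\otimes f, L^2_{rad}(\R^4))$ and the off-Gaussian Laguerre mass: $f\otimes f$ for radial $f$ is automatically invariant under $O(2)\times O(2)$ but not under all of $O(4)$, so its orthogonal projection onto $L^2_{rad}(\R^4)$ must be identified, and one must verify that this projection is at least as close to $f\otimes f$ as the Gaussian projection is (so the inequality with the larger distance, the one in the theorem, follows from the one with the smaller). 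Handling the regime where $f$ is far from Gaussian — where the bracketed quantity is negative — is then trivial, since there \eqref{St-R2} already gives the claim; so the real content is entirely the quantitative near-extremizer regime governed by the spectral gap.
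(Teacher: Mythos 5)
Your high-level strategy --- Laguerre expansion and reduction to a uniform spectral gap for a family of doubly stochastic block matrices --- matches the paper's, which proves Theorem \ref{radial-thm-R4} by specializing Theorem \ref{radial-thm-R4-gen} to $g=f\otimes f$. One conceptual slip in your outline: the constant vector $\1_S$ on each block is the coefficient vector of $\Psi^1_S$, so by the Laguerre addition formula $\Psi^1_S = \sum_{a+b=S}\Psi_a\otimes\Psi_b$, a sequence constant on every diagonal $a+b=S$ is the expansion of a function that is radial in $\R^4$ --- not of a Gaussian. The off-constant Laguerre mass therefore equals (up to the factor $1/4$) $\Dist^2(g,L^2_{rad}(\R^4))$ directly (Lemma \ref{Dist-G-rep-lemma}); no detour through distance-to-Gaussians is needed, and the elementary inequality $(1-s)\le(1-s/4)^4$ you invoke plays no role since the displayed $L^4$ bound is literally the fourth power of the theorem's statement.

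The genuine gap is the uniform spectral bound itself: your proposal offers a wish list but no mechanism that actually closes it. The ``Schur-type'' route --- bound every entry of $\Q_S$ from below by $c/(S+1)$ and apply Lemma \ref{matrix-ineq-lemma} --- is precisely Conjecture \ref{Q-min-conj}, which the paper verifies numerically for $S\le 30$ but does not prove. The paper's actual argument passes to an auxiliary integral operator $\K_S$ on $L^2(\R_+,e^{-x}\dx)$ with kernel $K_S(x,y)=\sum_{m+n=S}L_m(x/2)L_n(x/2)L_m(y/2)L_n(y/2)$, which encodes the powers $\Q_S^{\ell+1}$ via $Q^{(\ell+1)}(a,b,c,d)=\int_0^\infty L_{ab}(x/2)\,\K_S^{(\ell)}(L_{cd}(\cdot/2))(x)e^{-x}\dx$. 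Using the Laguerre Poisson kernel and the Bessel identity $I_0(z)^2=\fint_0^{\pi/2}I_0(2z\cos t)\,\dt$, the matrix $[\kappa_{m,n}]$ of $\K_S$ in the Laguerre basis is computed in closed form as sums of products of normalized central binomial coefficients (Lemma \ref{crucil-rep-lemma}); every entry is then shown to be $\ge 4/(\pi^2(S+1))$, giving $\sg(\K_S)\ge 4/\pi^2$, and this is transferred to $\sg(\Q_S)\ge 4/\pi^2$ by controlling the decay of the iterates of $\Q_S$ toward the uniform matrix with all entries $1/(S+1)$ (Lemma \ref{matrix-lemma-symmetric}, item (2)). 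Without this auxiliary operator --- or a proof of the entrywise Conjecture \ref{Q-min-conj}, or an outright diagonalization as in Conjecture \ref{Q-eigs} --- the uniform spectral gap, and hence the theorem, is not established.
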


\noindent {\bf Remarks.}
\begin{enumerate}
\item We prove the above inequality with $\gamma=4\pi^{-2}$, however we believe it can be improved a little and it should hold with $\gamma=3/4$, which is best possible  (see the remark after Theorem \ref{radial-thm-R4-gen}).
\item  The result \cite[Theorem 1.3]{HZ} implies that
$$
\frac{1}{4} \|f\|^4_{L^2(\R^2)} - \|u\|^4_{L^4(\R^2\times\R)}  = \frac{1}{4} \Dist^2(f\otimes f, L^2_{u,v}(\R^4)) ,
$$
where $L^2_{u,v}(\R^4)$ is the subspace of functions invariant under rotations that fix the directions $u=(1,0,1,0)$ and $v=(0,1,0,1)$. In this way, our result can be interpret as 
$$
\Dist(f\otimes f, L^2_{u,v}(\R^4)) \gtrsim \Dist\left(f\otimes f,L^2_{rad}(\R^{4})\right)
$$
for radial $f$.
\end{enumerate}

In \cite[Theorem 1]{MC}, Christ shows a quantitative relation between the distance of $f\otimes f$ to the subspace of radial functions and the distance to radial Gaussians. It can be deduce from this result that
 \begin{equation*}
 \Dist_{L^2(\R^{2d})}\left(f\otimes f,L^2_{rad}(\R^{2d})\right) \approx \|f\|_{L^2(\R^d)} \Dist_{L^2(\R^d)}(f,\G), 
 \end{equation*}
where the implied constants (from above and below) depend only on the dimension $d$. Above 
$$\G=\{ae^{-b|x|^2}: a,b\in \C, \, \re b>0\}$$ is the space of radial Gaussians in $\R^d$. In particular, we obtain the following corollary.

\begin{corollary}
Let $f\in L^2_{rad}(\R^2)$ and let $u(x,t)$ solve {\rm (SE)} with initial data $f$. There exists an universal constant $\Gamma>0$ such that
\calc{
\|u\|_{L^4(\R^2\times\R)}\leq \frac{1}{\sqrt{2}} \|f\|_{L^2(\R^2)}\bigg[1-\Gamma \frac{\Dist^2_{L^2(\R^2)}\left(f,\G\right)}{\|f\|^2_{L^2(\R^2)}}\bigg]^{1/4}.
}
\end{corollary}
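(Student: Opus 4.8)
The plan is to deduce the Corollary from Theorem~\ref{radial-thm-R4} by inserting the quantitative comparison between $\Dist_{L^2(\R^4)}(f\otimes f,L^2_{rad}(\R^4))$ and $\|f\|_{L^2(\R^2)}\Dist_{L^2(\R^2)}(f,\G)$ that follows from Christ's result \cite[Theorem 1]{MC} with $d=2$. Write $D = \Dist_{L^2(\R^2)}(f,\G)/\|f\|_{L^2(\R^2)}$, the normalized (dimensionless) distance from $f$ to the radial Gaussians. Christ's estimate gives constants $0<c_0\le C_0<\infty$, depending only on the dimension (here fixed to $2$), such that
\begin{equation*}
c_0\,\|f\|_{L^2(\R^2)}\,\Dist_{L^2(\R^2)}(f,\G) \;\le\; \Dist_{L^2(\R^4)}\!\left(f\otimes f, L^2_{rad}(\R^4)\right) \;\le\; C_0\,\|f\|_{L^2(\R^2)}\,\Dist_{L^2(\R^2)}(f,\G),
\end{equation*}
so that in particular $\Dist^2_{L^2(\R^4)}(f\otimes f,L^2_{rad}(\R^4)) \ge c_0^2\,\|f\|^2_{L^2(\R^2)}\,\Dist^2_{L^2(\R^2)}(f,\G)$.

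Next I would substitute this lower bound into the bracket in Theorem~\ref{radial-thm-R4}. Since $t\mapsto (1-t)^{1/4}$ is decreasing on $[0,1]$, replacing $\gamma\,\Dist^2_{L^2(\R^4)}(f\otimes f,L^2_{rad}(\R^4))/\|f\|^4_{L^2(\R^2)}$ by the smaller quantity $\gamma c_0^2 D^2$ only increases the right-hand side, giving
\begin{equation*}
\|u\|_{L^4(\R^2\times\R)} \;\le\; \frac{1}{\sqrt{2}}\,\|f\|_{L^2(\R^2)}\Bigl[\,1-\gamma c_0^2\,D^2\,\Bigr]^{1/4}.
\end{equation*}
One should check that the bracket stays in $[0,1]$: if $1-\gamma c_0^2 D^2 < 0$ the stated inequality is vacuous since the left side is nonnegative, and in any case $D^2 \le 1$ (as $0\in\G$ gives $\Dist(f,\G)\le\|f\|$), so shrinking $\gamma c_0^2$ if necessary — which is harmless, as $\Gamma$ is only required to be some universal positive constant — keeps the argument of the root nonnegative. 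Setting $\Gamma = \gamma c_0^2$ (with $\gamma = 4\pi^{-2}$ from the Theorem, or any admissible value, and $c_0$ the lower constant from \cite{MC} in dimension $2$) yields exactly the claimed inequality, and $\Gamma$ is universal because both $\gamma$ and $c_0$ are.

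The only genuine content beyond this bookkeeping is the invocation of \cite[Theorem 1]{MC}: one must confirm that Christ's statement is indeed two-sided and that the lower bound — the direction we actually use — holds with a constant depending only on $d$, uniformly over all $f\in L^2_{rad}(\R^2)$ (not merely near-extremizers). That is the step I expect to require the most care, since everything else is monotonicity of $(1-t)^{1/4}$ and relabeling of constants; but as the excerpt already quotes the equivalence $\Dist_{L^2(\R^{2d})}(f\otimes f,L^2_{rad}(\R^{2d})) \approx \|f\|_{L^2(\R^d)}\Dist_{L^2(\R^d)}(f,\G)$ with implied constants depending only on $d$, this is available to us and the Corollary follows immediately.
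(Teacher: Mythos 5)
Your proof is correct and is essentially the paper's own argument: the paper simply invokes the two-sided comparison from Christ's \cite[Theorem 1]{MC} (quoted just before the Corollary) and substitutes the lower bound into Theorem \ref{radial-thm-R4}, exactly as you do. One small simplification: the caution about the bracket possibly going negative is unnecessary, since replacing the subtracted term by a smaller one only increases the bracket, which is already nonnegative in Theorem \ref{radial-thm-R4} because the left-hand side there is nonnegative.
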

\noindent {\bf Remark.} The corollary begs the question whether this inequality holds as well for non-radial initial data $f$. We believe this to be true, but we have no formal proof.

\begin{theorem}\label{radial-thm-R4-gen}
Let $g:\R^2\times \R^2\to \C$ be a function in  $L^2(\R^2\times \R^2)$ such that $g(x,y)$ is radial in $x\in \R^2$ and $y\in \R^2$, that is, $g(x,y)$ depends only on $|x|$ and $|y|$. Let $u(x,y,t)$ solve {\rm (SE)} in $\R^2\times \R^2$ with initial data $u(x,y,0)=g(x,y)$. There exists an universal constant $\gamma>0$ such that
\lcalc{\label{michael-christ-ineq-gen}
\int_{\R} \int_{\R^2} |u(x,x,t)|^2\dx \dt \leq \frac{1}{4} \int_{\R^2\times \R^2} |g(x,y)|^2\dx\dy - \frac{\ga}{4} \Dist^2_{L^2(\R^4)}(g,L_{rad}^2(\R^4))
}
\end{theorem}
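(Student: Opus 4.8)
The plan is to pass from the space--time integral to the Hundertmark--Zharnitsky quadratic form, diagonalize that form by Laguerre and harmonic expansions, and reduce the claim to a uniform spectral gap for an explicit family of doubly stochastic matrices.

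\emph{Step 1: reduction to a distance comparison.} Write $V=L^2_{u,v}(\R^4)$ and $W=L^2_{rad}(\R^4)$, so that $W\subset V$, and let $P_V$ denote the orthogonal projection onto $V$. For $g=f\otimes f$ the identity in Remark (2) (that is, \cite[Theorem 1.3]{HZ}) gives $\int\int|u(x,x,t)|^2\,dx\,dt=\tfrac14\|P_Vg\|^2$; by polarization and a density argument this extends to all $g\in L^2(\R^4)$. Hence
\[
\tfrac14\|g\|^2_{L^2(\R^4)}-\int_{\R}\int_{\R^2}|u(x,x,t)|^2\,dx\,dt=\tfrac14\,\Dist^2_{L^2(\R^4)}(g,V),
\]
and Theorem \ref{radial-thm-R4-gen} becomes the assertion that $\Dist^2(g,V)\ge\gamma\,\Dist^2(g,W)$ for every $g$ in the space $H$ of functions in $L^2(\R^4)$ that are radial in $x$ and in $y$ separately. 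One checks that $H\cap V=W$, so the two distances vanish simultaneously and a quantitative comparison is precisely what is required.

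\emph{Step 2: block diagonalization.} Both $V$ and $W$ are fixed sets of compact subgroups of $O(4)$ that commute with the harmonic oscillator $-\Delta_{\R^4}+|X|^2$: the group $SO(4)$ for $W$, and for $V$ the circle $K\cong SO(2)$ acting on the plane $\{x=-y\}$ while fixing its orthogonal complement $\{x=y\}$, so that $P_V$ is the average over $K$. Thus $P_V$ preserves every oscillator eigenspace, and in fact every $O(4)$-isotypic component inside it. Expanding $L^2_{rad}(\R^2)$ in the normalized Laguerre functions $\varphi_n=\sqrt2\,L^{(0)}_n(2\pi|\cdot|^2)e^{-\pi|\cdot|^2}$, the space $H$ has orthonormal basis $\{\varphi_m\otimes\varphi_n\}$, and the quadratic form $\langle P_V\,\cdot\,,\cdot\rangle$ block-diagonalizes into $(N+1)\times(N+1)$ blocks $M_N$ indexed by $N=m+n$, each with $0\preceq M_N\preceq I$. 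The Laguerre addition theorem $\sum_m L^{(0)}_m(s)L^{(0)}_{N-m}(r)=L^{(1)}_N(s+r)$ identifies $\sum_m\varphi_m\otimes\varphi_{N-m}$ with the $\R^4$-radial Hermite function in that eigenspace, so $M_N\mathbf 1=\mathbf 1$ and the $M_N$ are doubly stochastic (the special feature of the $(4,4,2)$ case noted in \cite[Appendix]{Gon}); the optimal $\gamma$ is then $1-\sup_N\lambda_2(M_N)$. The entries of $M_N$ can be generated from $\sum_n t^n\varphi_n=\tfrac{\sqrt2}{1-t}\,e^{-\pi\frac{1+t}{1-t}|\cdot|^2}$, reducing everything to a four-dimensional Gaussian integral followed by $\frac1{2\pi}\int_0^{2\pi}\frac{d\theta}{a+b\cos\theta}=(a^2-b^2)^{-1/2}$.

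\emph{Step 3: the spectral gap, and conclusion.} Refining the decomposition of Step 2, the ($SO(2)_x\times SO(2)_y$)-invariant vectors in the oscillator eigenspace of index $N$ split as a direct sum over $\mu=0,\dots,N$ of one-dimensional spaces $\mathcal H_{2\mu}\otimes(\text{radial})$, one from each solid harmonic of degree $2\mu$; each is $K$-invariant, so $P_V$ acts on it by a scalar $\lambda_\mu$ independent of $N$. The bi-zonal harmonic of degree $2\mu$ restricted to $S^3$ is the Legendre polynomial $P_\mu(|x|^2-|y|^2)$, and the Legendre addition theorem gives $\langle\kappa_\theta u_\mu,u_\mu\rangle/\|u_\mu\|^2=P_\mu(\cos\theta)$, so that
\[
\lambda_\mu=\frac1{2\pi}\int_0^{2\pi}P_\mu(\cos\theta)\,d\theta=P_\mu(0)^2,
\]
which is $0$ for odd $\mu$ and $\big(\binom{2k}{k}4^{-k}\big)^2$ for $\mu=2k$. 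Since $\binom{2k}{k}4^{-k}$ decreases in $k$ we get $\sup_{\mu\ge1}\lambda_\mu=\lambda_2=\tfrac14$, hence the sharp value $\gamma=\tfrac34$; the weaker $\gamma=4\pi^{-2}$ of the theorem follows from any coarser uniform estimate $\lambda_2(M_N)\le4\pi^{-2}$ extracted from the explicit entries of $M_N$. Finally, $\langle M_N\mathbf c,\mathbf c\rangle\le|\mathbf c|^2-\gamma\,|\mathbf c-\langle\mathbf c,\mathbf w_N\rangle\mathbf w_N|^2$, with $\mathbf w_N=\mathbf 1/\sqrt{N+1}$, summed back through the orthogonal block decomposition gives $\langle P_Vg,g\rangle\le\|g\|^2-\gamma\,\Dist^2(g,W)$ for all $g\in H$, which by Step 1 is exactly \eqref{michael-christ-ineq-gen}.

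\emph{Main difficulty.} The one substantial point is the uniform-in-$N$ control of $\lambda_2(M_N)$: the matrices grow, so one must genuinely diagonalize them rather than merely bound norms. The harmonic/Legendre splitting achieves this cleanly — and even sharply — but it requires setting up carefully the interplay of the $K$-action, the oscillator eigenspaces, and the Laguerre-to-Jacobi change of basis; working directly with the explicit doubly stochastic $M_N$ one obtains only a suboptimal gap, and pushing it to $\gamma=3/4$ is tantamount to identifying $\lambda_{2k}=(\binom{2k}{k}4^{-k})^2$ together with the elementary monotonicity of these Wallis ratios.
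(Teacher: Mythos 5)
Your overall strategy coincides with the paper's at the structural level: both pass to the Hundertmark--Zharnitsky quadratic form, expand in the Laguerre basis $\{\Psi_m\otimes\Psi_n\}$, observe that the form block-diagonalizes into $(S+1)\times(S+1)$ doubly stochastic matrices indexed by $S=m+n$ (your $M_N$ are exactly the paper's $\Q_S$), and reduce the theorem to a uniform spectral gap for these blocks (the paper's Lemma~\ref{spectral-gap-lemma}). Where you diverge from the paper is in how the gap is obtained. The paper does not attempt to diagonalize $\Q_S$: it introduces an auxiliary kernel operator $\K_S$ satisfying $\Q_S^{\ell+1}(a,b,c,d)=\int L_{ab}(x/2)\K_S^{(\ell)}[L_{cd}(\cdot/2)](x)e^{-x}\dx$, computes its matrix entries $\kappa_{m,n}$ explicitly as sums of central binomial coefficients (Lemma~\ref{crucil-rep-lemma}), shows via Stirling that $\kappa_{m,n}\ge 4/(\pi^2(S+1))$, and then transfers the resulting gap of $\K_S$ to $\Q_S$ through item~(2) of Lemma~\ref{matrix-lemma-symmetric}. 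You instead propose to diagonalize $\Q_S$ outright via the $SO(4)$-harmonic decomposition of the oscillator eigenspace, identifying the eigenvectors with bi-zonal spherical harmonics and the eigenvalues with $P_\mu(0)^2$. If correct, this would be a genuine strengthening: it would prove the paper's Conjecture~\ref{Q-eigs} and yield the conjectured sharp constant $\gamma=3/4$.

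The gap is precisely there. Your Step~3 rests on the identity
\[
\frac{\langle \kappa_\theta u_\mu, u_\mu\rangle}{\|u_\mu\|^2}=P_\mu(\cos\theta),
\]
which you assert to follow from ``the Legendre addition theorem.'' This is not self-evident: $u_\mu$ is fixed by the torus $SO(2)_x\times SO(2)_y$, not by $K$, so $\theta\mapsto\langle\kappa_\theta u_\mu,u_\mu\rangle$ is not a zonal spherical function for the pair $(SO(4),K)$, and the classical Legendre addition formula is for integrating in the azimuthal angle of $S^2$ rather than for averaging a bi-zonal harmonic on $S^3$ over a transverse circle. A direct check for $\mu=1$ does give $\cos\theta=P_1(\cos\theta)$, which supports the claim, but for general $\mu$ a genuine argument (e.g.\ via the $SU(2)\times SU(2)$ decomposition $\H_{2\mu}\cong V_{2\mu}\boxtimes V_{2\mu}$ and Wigner $d$-matrix elements, or a careful Funk--Hecke computation in the $(a,b)$ coordinates) is needed, and you have not supplied it. Since this identity is exactly Conjecture~\ref{Q-eigs} in disguise, its absence leaves the main inequality unproven. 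Moreover, you do not give a fallback route to the paper's weaker constant: you say the estimate $\lambda_2(M_N)\le 4\pi^{-2}$ ``follows from any coarser uniform estimate extracted from the explicit entries of $M_N$,'' but no such estimate is produced, and the paper's own proof shows this is the delicate point — the positive lower bound on entries is obtained not for $\Q_S$ itself but for the auxiliary kernel $\K_S$, after a non-trivial generating-function calculation. As written, the proposal therefore identifies the right spectral problem and a promising route to the optimal answer, but does not establish either the sharp gap or the weaker one.
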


\noindent {\bf Remarks.}
\begin{enumerate}
\item Theorem \ref{radial-thm-R4}, with the same $\gamma$, follows directly from Theorem \ref{radial-thm-R4-gen} by taking $g(x,y)=f(x)f(y)$. We prove the above inequality with $\gamma=4\pi^{-2}$, but we believe it should hold with $\gamma=3/4$. The optimality  of $\gamma=3/4$ is supported by numerical computations presented in Section \ref{numerics}. This would be best possible since the function
$$
g(x,y) = (1-2\pi|x|^2)(1-2\pi|y|^2)e^{-\pi(|x|^2+|y|^2)} = \Psi_1(x)\Psi_1(y),
$$ 
attains equality in \eqref{michael-christ-ineq-gen} with $\gamma=3/4$, 
and this can easily be shown with the aid of Theorem \ref{QS-op-thm} and Lemma \ref{Dist-G-rep-lemma}.

\item We prove Theorem \ref{radial-thm-R4-gen} by using some of the techniques developed in \cite{Gon}. We transform inequality \eqref{michael-christ-ineq-gen}, using Laguerre polynomial expansions, into a series of finite-dimensional inequalities for doubly-stochastic matrices and we show that these matrices have spectral gaps uniformly bounded away from zero.
\end{enumerate}

\subsection{Conjectures for other even exponents}
In dimension $1$ there are two other even exponents $(p,q,d)=(6,6,1)$ and $(p,q,d)=(4,8,1)$ where analogous sharp inequalities should hold. Although sharing several similarities with the two-dimensional case, in the one-dimensional case the matrices that appear no longer are doubly stochastic (which is crucial in our proofs) and a new idea is needed to overcome this issue. However,  we have preformed numerical simulations that strongly suggest that that the following conjectures are true.

\begin{conjecture}
Let $g:\R\times \R \times \R\to \C$ be a function in $L^2(\R\times \R \times \R)$ such that $g(x,y,z)$ is even in each variable. Let $u(x,y,z,t)$ be a solution of {\rm (SE)} in $\R\times \R \times \R$ with initial data $g(x,y,z)$. Then there exists a universal constant $\alpha>0$ such that
\begin{align*}
&\int_{\R} \int_{\R} |u(x,x,x,t)|^2\dx \dt \\ & \leq \frac{1}{\sqrt{12}} \int_{\R\times \R\times \R} |g(x,y,z)|^2\dx\dy\dz - {\alpha} \, \Dist^2_{L^2(\R^3)}(g,L_{rad}^2(\R^3)).
\end{align*}
\end{conjecture}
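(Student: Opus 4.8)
The plan is to mimic the machinery developed in \cite{Gon} and applied in the proof of Theorem~\ref{radial-thm-R4-gen} above, but now in the one-dimensional setting with three factors. First I would recall that for $f\in L^2(\R)$ even, the quantity $\int_\R\int_\R |u(x,x,x,t)|^2\,\dx\,\dt$ is a quartic-type diagonal-restriction functional in $g$ (here genuinely a quadratic form in $g$, since $g$ is a single function on $\R^3$ and $u(x,x,x,t)$ depends linearly on $g$); by Plancherel in $t$ this becomes an integral over the sphere/paraboloid configuration $\{(\xi,\eta,\zeta):\xi+\eta+\zeta = a,\ \xi^2+\eta^2+\zeta^2 = b\}$, exactly the structure that makes $(6,6,1)$ and $(4,8,1)$ ``even exponents.'' The sharp constant $1/\sqrt{12}$ is the known value $C(6,6,1)^6$ (up to normalization), so the zeroth-order inequality is Foschi's theorem; the task is the negative second term.

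Second, I would expand $g$ in the Hermite/Laguerre basis adapted to the harmonic oscillator: writing $g$ in terms of the one-dimensional Hermite functions $h_n$ (or, after separating even/odd and passing to the half-line, Laguerre functions $L_n^{(-1/2)}$), the Schr\"odinger evolution acts diagonally up to phases $e^{i(2n+1/2)t}$ (or the appropriate eigenvalue), so the $t$-integral selects resonant quadruples/triples of indices. Because $g$ is even in each of the three variables, only even Hermite indices survive, and the functional decomposes as a direct sum over ``total degree'' levels $N$ of finite-dimensional quadratic forms $Q_N$ acting on the coefficient vectors supported on that level. The key structural fact I would establish — the analogue of the doubly-stochastic property used for $(4,4,2)$ — is that each $Q_N$ is represented by a symmetric matrix $M_N$ that is (after the correct normalization) \emph{substochastic or stochastic}, with the constant-vector / Gaussian direction as the Perron eigenvector achieving the top eigenvalue $1$; Foschi's sharp inequality is precisely the statement that all eigenvalues of $M_N$ are $\le 1$. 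The negative term then amounts to a \textbf{uniform spectral gap}: I must show $\lambda_1(M_N)\le 1$ with $\lambda_2(M_N)\le 1-\alpha_0$ for some $\alpha_0>0$ independent of $N$, and that $\Dist^2_{L^2(\R^3)}(g,L^2_{rad}(\R^3))$ is comparable to $\sum_N \|P_N^\perp \hat g\|^2$, the total mass of $g$ orthogonal to the Perron directions.

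Third, to get the spectral gap I would compute the matrix entries of $M_N$ explicitly using generating functions for Hermite/Laguerre products — the same generating-function identities (Mehler-type kernels) that appear in \cite{Gon} — obtaining a closed form for $M_N$, and then either (a) diagonalize $M_N$ exactly, as happens in low dimensions when an extra hypergeometric identity collapses the matrix, or (b) bound $\lambda_2(M_N)$ from above by exhibiting a suitable comparison: e.g. show $I - M_N$ dominates a fixed positive-definite perturbation on the orthogonal complement of the Perron vector, or use a Schur-test / Gershgorin-type estimate refined by the known eigenvector. Finally I would sum over $N$, match the orthogonal-complement mass to $\Dist^2(g,L^2_{rad}(\R^3))$ via Lemma~\ref{Dist-G-rep-lemma}-type identities, and read off $\alpha$.

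\textbf{Main obstacle.} The crux — and the reason this is stated as a conjecture rather than a theorem — is precisely the step the excerpt flags: in one dimension the matrices $M_N$ are \emph{not} doubly stochastic, so the clean argument that forces the Perron eigenvalue to be $1$ and lets one read off the gap from stochasticity breaks down. One must instead either find the right substitute normalization making them substochastic with a controlled defect, or prove the uniform gap by a direct (likely generating-function plus asymptotic) analysis of $\lambda_2(M_N)$ as $N\to\infty$, controlling the $O(1)$ small-$N$ cases by hand and the large-$N$ tail by an asymptotic expansion of the Hermite-product kernel. Establishing that $\sup_N \lambda_2(M_N)<1$ — rather than merely $\le 1$ — is where all the real work lies; the numerical simulations mentioned in the paper presumably exhibit this gap and pin down the optimal $\alpha$, but turning that into a proof requires the missing idea.
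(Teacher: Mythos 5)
The statement you are addressing is not proved in the paper at all: it is stated there as Conjecture 1, precisely because the argument that works for $(p,q,d)=(4,4,2)$ breaks down in dimension one. Your proposal is a strategy outline rather than a proof, and the step you yourself flag as the ``main obstacle'' is exactly the open point. Concretely: in the one-dimensional even setting the natural basis is $\Psi_n^{\nu}$ with $\nu=-1/2$, and the level-$S$ blocks $M_S$ of the relevant quadratic form (the analogues of $\Q_S$) are no longer doubly stochastic, so both pillars of the paper's proof collapse simultaneously. First, Lemma \ref{matrix-ineq-lemma} and item (2) of Lemma \ref{matrix-lemma-symmetric} are exactly the tools that convert positivity of entries and row/column sums equal to $1$ into the uniform spectral gap $\sg(\Q_S)\geq 4/\pi^2$; without stochasticity there is no a priori identification of the top eigenvalue, no Perron vector equal to $\1_S$, and no power-iteration criterion to bound the second eigenvalue. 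Second, the identification of the distance term itself (the analogue of Lemma \ref{Dist-G-rep-lemma}, resting on the addition formula, now with parameter $1/2$ rather than $1$) produces a \emph{weighted} direction in each block, because the norms $\|\Psi_n^{-1/2}\|^2\propto\binom{n-1/2}{n}$ are no longer constant in $n$; so even the claim that the orthogonal-complement mass you want to control coincides with $\Dist^2_{L^2(\R^3)}(g,L^2_{rad}(\R^3))$ requires verifying that the top eigenvector of each $M_S$ is precisely that weighted vector, which in the $(4,4,2)$ case came for free from double stochasticity.

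Your options (a) exact diagonalization or (b) Schur/Gershgorin-type bounds refined by the known eigenvector are reasonable directions, but as written they are wishes, not arguments: no closed form for the entries of $M_S$ is derived, no candidate substitute normalization is produced, and no estimate showing $\sup_{S}\lambda_2(M_S)<\lambda_1(M_S)$ uniformly is even sketched. Since establishing that uniform gap (together with the correct matching of the top eigenvector to the radial projection) is the entire content of the conjecture, the proposal does not constitute a proof; it restates the problem in the framework of \cite{Gon}, which is the same framework the paper itself says requires ``a new idea'' in dimension one.
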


\begin{conjecture}
Let $g:\R\times \R \times \R \times \R \to \C$ be a function in $L^2(\R\times \R \times \R \times \R)$ such that $g(x,y,z,w)$ is even in each variable. Let $u(x,y,z,w,t)$ be a solution of {\rm (SE)} in $\R\times \R \times \R \times \R$ with initial data $g(x,y,z,w)$. Then there exists a universal constant $\beta>0$ such that
\begin{align*}
& \int_{\R} \int_{\R\times \R} |u(x,x,y,y,t)|^2\dx\dy \dt \\ & \leq \frac{1}{4} \int_{\R\times \R\times \R \times \R} |g(x,y,z,w)|^2\dx\dy\dz\dw - {\beta} \, \Dist_{L^2(\R^4)}^2(g,L^2_{rad}(\R^4)).
\end{align*}
\end{conjecture}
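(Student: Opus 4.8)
The plan is to push inequality \eqref{michael-christ-ineq-gen} to the level of Laguerre coefficients, along the lines of \cite{Gon}, keeping the full class of bi-radial data in view and isolating where the gain over the sharp estimate comes from. Write $z=(x,y)\in\R^2\times\R^2$, and let $\{\psi_n\}_{n\ge0}$ be the $L^2$-normalized Laguerre functions on $\R^2$, i.e.\ suitable constant multiples of $L^{(0)}_n(2\pi|\cdot|^2)e^{-\pi|\cdot|^2}$. Then $\{\psi_m\otimes\psi_n\}_{m,n\ge0}$ is an orthonormal basis of the subspace of $L^2(\R^4)$ consisting of functions depending only on $|x|$ and $|y|$ --- which is exactly the class of data allowed in the theorem --- so we may write $g=\sum_{m,n\ge0}d_{m,n}\,\psi_m\otimes\psi_n$ with $\|g\|^2_{L^2(\R^4)}=\sum_{m,n}|d_{m,n}|^2$, the coefficients $d_{m,n}\in\C$ being completely free.

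First I would rewrite the left-hand side of \eqref{michael-christ-ineq-gen} in these coordinates. The free evolution on $\R^4$ factors as $e^{it\Delta_{\R^2}}\otimes e^{it\Delta_{\R^2}}$, so $u(x,x,t)=\sum_{m,n}d_{m,n}\,(e^{it\Delta}\psi_m)(x)\,(e^{it\Delta}\psi_n)(x)$. Using the Mehler-type identity $e^{it\Delta}\psi_n(x)=\lambda(t)\,\mu(t)^n e^{i\pi\nu(t)|x|^2}\psi_n(\rho(t)x)$, with $\lambda,\mu,\nu,\rho$ universal and $|\mu(t)|\equiv1$, the quadratic phases $e^{\pm i\pi\nu(t)|x|^2}$ cancel inside $|u(x,x,t)|^2$; integrating first in $x\in\R^2$ (a dilation) and then in $t\in\R$ via $\beta=\arctan(4\pi t)$, one is left with
\begin{equation*}
\int_{\R}\int_{\R^2}|u(x,x,t)|^2\,\dx\,\dt=\frac14\sum_{N\ge0}\langle M^{(N)}d^{(N)},d^{(N)}\rangle,\qquad d^{(N)}:=(d_{m,n})_{m+n=N}\in\C^{N+1},
\end{equation*}
where $M^{(N)}$ is the $(N+1)\times(N+1)$ matrix with entries $M^{(N)}_{(m,n),(m',n')}=\int_{\R^2}\psi_m\psi_n\psi_{m'}\psi_{n'}$ for $m+n=m'+n'=N$. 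Different total degrees $N=m+n$ decouple because the residual phase is $\mu(t)^{(m+n)-(m'+n')}=e^{-2i(N-N')\beta}$ and $\int_{-\pi/2}^{\pi/2}e^{-2i(N-N')\beta}\,\d\beta=\pi\,\delta_{N,N'}$. Each $M^{(N)}$ is real and symmetric, and --- this is the structural input I would take from \cite{Gon}, the present case being precisely the $\alpha=0$ one treated there --- the $M^{(N)}$ have nonnegative entries and are doubly stochastic; in particular $M^{(N)}\1=\1$ (with $\1=\1_{N+1}$), $\|M^{(N)}\|=1$, and the eigenvalue $1$ is simple by Perron--Frobenius.

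Next I would identify the distance term. By the Laguerre addition formula $\sum_{m+n=N}L^{(0)}_m(s)L^{(0)}_n(t)=L^{(1)}_N(s+t)$, the function $\sum_{m+n=N}\psi_m\otimes\psi_n$ on $\R^4$ is a constant multiple of $L^{(1)}_N(2\pi|z|^2)e^{-\pi|z|^2}$, i.e.\ of the order-$1$ radial Laguerre function $\Phi_N$ on $\R^4$; so after normalizing, the unit vector in the direction of $\Phi_N$ has coefficient vector $(N+1)^{-1/2}\1_{N+1}$ in block $N$ and $0$ in the others, and $\{\Phi_N/\|\Phi_N\|\}_{N\ge0}$ is an orthonormal basis of $L^2_{rad}(\R^4)$. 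Since the orthogonal projection onto $L^2_{rad}(\R^4)$ (averaging over $SO(4)$) maps bi-radial functions to bi-radial functions, for bi-radial $g$ one gets
\begin{equation*}
\Dist^2_{L^2(\R^4)}\big(g,L^2_{rad}(\R^4)\big)=\sum_{N\ge0}\Bigl(\|d^{(N)}\|^2-\tfrac1{N+1}\bigl|\sum_{m+n=N}d_{m,n}\bigr|^2\Bigr)=\sum_{N\ge0}\|d^{(N)}-P_N d^{(N)}\|^2,
\end{equation*}
where $P_N$ is the orthogonal projection of $\C^{N+1}$ onto $\1_{N+1}$. Since $M^{(N)}$ is symmetric and fixes $\1_{N+1}$, it leaves $\1_{N+1}^\perp$ invariant, whence $\langle M^{(N)}d^{(N)},d^{(N)}\rangle=\|P_Nd^{(N)}\|^2+\langle M^{(N)}(d^{(N)}-P_Nd^{(N)}),d^{(N)}-P_Nd^{(N)}\rangle\le\|P_Nd^{(N)}\|^2+\mu_2(M^{(N)})\,\|d^{(N)}-P_Nd^{(N)}\|^2$, where $\mu_2(M^{(N)})$ denotes the second-largest eigenvalue of $M^{(N)}$. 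Summing over $N$ and combining with the two displays above, Theorem \ref{radial-thm-R4-gen} follows with any $\gamma>0$ for which $\sup_{N\ge0}\mu_2(M^{(N)})\le1-\gamma$.

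I expect this last point --- a uniform spectral gap for the $M^{(N)}$ --- to be the whole difficulty; everything preceding is bookkeeping once \cite{Gon} is invoked. To prove it I would first extract the entries $M^{(N)}_{(m,n),(m',n')}=\int_{\R^2}\psi_m\psi_n\psi_{m'}\psi_{n'}$ in closed form through the generating-function computations of \cite{Gon}, so as to display the family $\{M^{(N)}\}_{N\ge0}$ as the orthogonal block decomposition of a single bounded self-adjoint operator $\mathcal T$ on the bi-radial subspace of $L^2(\R^4)$, with $0\le\mathcal T\le I$ and $\ker(I-\mathcal T)=L^2_{rad}(\R^4)$; then $\sup_{N\ge0}\mu_2(M^{(N)})=\|\mathcal T-P_{\mathrm{rad}}\|$, where $P_{\mathrm{rad}}$ is the projection onto $L^2_{rad}(\R^4)$. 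Bounding $\|\mathcal T-P_{\mathrm{rad}}\|\le1-4\pi^{-2}$ then reduces to one explicit kernel estimate --- a Schur test, or the identification of the relevant operator in the radial variable with a classical Hilbert/Hardy-type operator, which would be the natural origin of the constant $\pi$. The conjecturally optimal value $\gamma=3/4$ --- attained already in the block $N=2$ by $g=\psi_1\otimes\psi_1$, as one checks from the explicit $3\times3$ matrix $M^{(2)}$, whose second eigenvalue equals $1/4$ --- would instead require controlling $\mu_2(M^{(N)})$ block by block rather than through one uniform operator-norm bound, and I would leave that to the numerics of Section \ref{numerics}.
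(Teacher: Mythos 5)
You have proved (or rather, sketched a proof of) the wrong statement. The statement at hand is the paper's second \emph{conjecture}: $g$ is a function on $\R\times\R\times\R\times\R$ that is merely \emph{even in each one-dimensional variable}, and the left-hand side restricts $u(x,x,y,y,t)$. Your argument instead takes $g$ on $\R^2\times\R^2$ radial in each $\R^2$-block and restricts $u(x,x,t)$ --- that is exactly Theorem \ref{radial-thm-R4-gen}, which the paper already proves. After permuting coordinates so that the two diagonal restrictions match, the bi-radial class you work with is a \emph{strict} subclass of the even-in-each-variable class (for instance $h_2(x)h_0(y)h_0(z)h_0(w)$, with $h_k$ the Hermite functions, is admissible in the conjecture but is not radial in any pairing of the variables), so your Laguerre expansion $g=\sum d_{m,n}\psi_m\otimes\psi_n$ simply does not represent the general datum of the conjecture. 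This is not a cosmetic issue: for the even-in-each-variable class the natural expansion is in tensor products of even Hermite functions in each real variable, and the resulting block matrices are \emph{not} doubly stochastic --- the paper states this explicitly as the reason the statement is left as a conjecture --- so the entire mechanism you invoke (Lemmas \ref{matrix-ineq-lemma} and \ref{matrix-lemma-symmetric}: eigenvalue $1$ with eigenvector $\1$, gap controlled by the minimal entry or by convergence of powers to $\1\1^t/n$) is unavailable. Hence the proposal does not establish the statement; no proof of it exists in the paper either.

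A secondary point: even read as a proof of Theorem \ref{radial-thm-R4-gen}, your argument is incomplete at precisely the step you yourself identify as "the whole difficulty". The uniform bound $\sup_{N}\mu_2(M^{(N)})\le 1-4\pi^{-2}$ is only announced as reducing to "one explicit kernel estimate"; in the paper this is the content of Lemma \ref{crucil-rep-lemma} and Steps 1--5 of Section \ref{proof}: one passes to the operator $\K_S$ whose matrix is essentially the square of the block, computes its entries by generating functions, shows they are bounded below by $4/(\pi^2(S+1))$, and then transfers the resulting gap back to $\Q_S$ through the behaviour of matrix powers (item (2) of Lemma \ref{matrix-lemma-symmetric}); note that the minimal entry of $\Q_S$ itself decays too fast for a direct application of the minimum-entry bound. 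Deferring this leaves the quantitative heart of even the bi-radial theorem unproved, and in any case does not touch the conjecture you were asked to prove.
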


Choosing $f\in L^2(\R)$ even and letting $g(x,y,z)=f(x)f(y)f(z)$ and $g(x,y,z,w)=f(x)f(y)f(z)f(w)$ respectively in the above conjectures, we would get sharpened Strichartz inequalities analogous to Theorem \ref{radial-thm-R4} for the exponents $(p,q,d)=(6,6,1)$ and $(p,q,d)=(4,8,1)$ respectively.
\section{Preliminaries}
In this section we present some preliminary results that will be used in the proof of Theorem \ref{radial-thm-R4-gen} in Section \ref{proof}.

\subsection{Inequalities for doubly stochastic matrices}
A matrix $A=[a_{i,j}]_{i,j=1,...,n}$ is said to be doubly stochastic if $a_{i,j}\geq 0$ for all $i,j$ and $A^t\1=A\1=\1$, where $\1=(1,1,...,1)$. In what follows $|\cdot|$ is the euclidean norm in $\C^n$, $\la\cdot,\cdot\ra$ is the Hermitian inner product in $\C^n$ and $\Dist(v,\la\1\ra)=\inf_{\lambda\in\C} \{|v-\lambda\1|\}$.

\begin{lemma}\label{matrix-ineq-lemma}
Let $A=[a_{i,j}]_{i,j=1,...,n}$ be doubly stochastic and assume that $\mu=n\min_{i,j} \{a_{i,j}\}>0$. Then for any vector $v\in\C^n$ we have
\lcalc{\label{matrix-ineq}
|\la Av, v\ra| \leq |v|^2 - \mu\Dist(v,\la\1\ra)^2.
}
\end{lemma}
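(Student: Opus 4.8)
The plan is to decompose $v$ along the line $\langle\1\rangle$ and its orthogonal complement, and then exploit the doubly stochastic structure together with the positivity bound $\mu$ on the entries. Write $v = \lambda\1 + w$ where $\lambda = \tfrac{1}{n}\langle v,\1\rangle$ and $w\perp\1$, so that $|w| = \Dist(v,\langle\1\rangle)$ and $|v|^2 = n|\lambda|^2 + |w|^2$. Since $A\1 = \1$ and $A^t\1 = \1$, we have $Av = \lambda\1 + Aw$ and $\langle Aw,\1\rangle = \langle w, A^t\1\rangle = \langle w,\1\rangle = 0$, so $Aw\perp\1$ as well. Expanding, $\langle Av,v\rangle = n|\lambda|^2 + \langle Aw,w\rangle$, hence it suffices to prove $|\langle Aw,w\rangle| \le |w|^2 - \mu|w|^2 = (1-\mu)|w|^2$ for all $w\perp\1$; i.e. that the restriction of $A$ to $\1^{\perp}$ has operator norm at most $1-\mu$.

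The key step is to isolate the positivity. Write $A = \mu J + (1-\mu)B$, where $J = \tfrac{1}{n}\1\1^t$ is the orthogonal projection onto $\langle\1\rangle$ and $B = \tfrac{1}{1-\mu}(A - \mu J)$. One checks $B$ is again doubly stochastic: its entries are $\tfrac{1}{1-\mu}(a_{i,j} - \tfrac{\mu}{n}) = \tfrac{1}{1-\mu}(a_{i,j} - \min_{k,l}a_{k,l}) \ge 0$, and $B\1 = B^t\1 = \1$ since both $A$ and $J$ fix $\1$ from either side. (Note $\mu \le 1$ automatically, since the minimal entry of a doubly stochastic matrix is at most $1/n$.) Now for $w\perp\1$ we have $Jw = 0$, so $Aw = (1-\mu)Bw$, and therefore $|\langle Aw,w\rangle| = (1-\mu)|\langle Bw,w\rangle| \le (1-\mu)\|B\|_{op}\,|w|^2$. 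Since $B$ is doubly stochastic, $\|B\|_{op} \le 1$: indeed $\|B\|_{op} = \|B\|_{op \to op}$ is bounded by $\max_i \sum_j |b_{i,j}| = 1$ (the $\ell^\infty\to\ell^\infty$ bound) and by $\max_j \sum_i |b_{i,j}| = 1$ (the $\ell^1\to\ell^1$ bound), and interpolation (or directly the Schur test) gives the $\ell^2\to\ell^2$ bound $\le 1$. This yields $|\langle Aw,w\rangle| \le (1-\mu)|w|^2$, completing the argument.

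I expect the only mildly delicate point to be justifying $\|B\|_{op}\le 1$ for a doubly stochastic matrix over $\C$ — one should note that $\langle Bw,w\rangle$ need not be real, so one genuinely bounds the quadratic form by the operator norm rather than by an eigenvalue directly; the Schur test (Cauchy--Schwarz against the two unit-row/column-sum conditions) handles this cleanly. Everything else is bookkeeping with the orthogonal splitting. One should also record, for use later, the equality case: equality forces $w$ to lie in the subspace where $B$ acts as an isometry, which under the strict positivity $\mu>0$ of the entries of $A$ constrains the near-extremizers — but that refinement is not needed for the statement as given.
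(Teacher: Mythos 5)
Your proof is correct and is essentially the paper's own argument: both decompose $v$ into its component along $\1$ and its orthogonal complement, introduce the auxiliary doubly stochastic matrix $B=\tfrac{1}{1-\mu}(A-\tfrac{\mu}{n}\1\1^t)$, and reduce the estimate to the fact that $B$ has operator norm at most $1$ on $\C^n$. The only cosmetic difference is that where you invoke the Schur test / Riesz--Thorin interpolation from the row- and column-sum conditions, the paper unwinds that same bound by hand via a row-wise Cauchy--Schwarz argument giving $|Bv|\le|v|$; these are the same estimate packaged differently, and your remark about $\langle Bw,w\rangle$ possibly being non-real is a correct and worthwhile observation.
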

\begin{proof}
Let $e_i$ denote the coordinate vectors. We have 
\calc{
|\la Av,e_i\ra|^2 &  = \bigg|\sum_{j=1}^n a_{i,j}v_j\bigg|^2 = \bigg|\sum_{j=1}^n \sqrt{a_{i,j}}\sqrt{a_{i,j}}v_j\bigg|^2 \\ 
& \leq\bigg( \sum_{j=1}^n a_{i,j}\bigg)\bigg(\sum_{j=1}^n{a_{i,j}}|v_j|^2\bigg) = \sum_{j=1}^n a_{i,j}|v_j|^2,
}
where we have used only the Cauchy-Schwarz inequality. Thus, we obtain
\calc{
|Av|^2 = \sum_{i=1}^n |\la Av,e_i\ra|^2 \leq \sum_{i=1}^n\sum_{j=1}^n a_{i,j}|v_j|^2 = \sum_{j=1}^n \bigg(\sum_{i=1}^na_{i,j}\bigg)|v_j|^2 = \sum_{j=1}^n |v_j|^2=|v|^2.
}
Now let $B=[\frac{a_{i,j}-\mu/n}{1-\mu}]_{i,j=1,...,n}$ (note that we always have $\mu\leq 1$ and $\mu=1$ if and only if $a_{i,j}=1/n$ for all $i,j$ and in this case inequality \eqref{matrix-ineq} is trivial). Clearly $B$ is also doubly stochastic and we obtain 
$$
|Bv|^2\leq |v|^2
$$
for all $v\in\C^n$. However, if $v$ is orthogonal to $\1$ then $(1-\mu)Bv = Av$. Let $v\in\C^n$ be any vector and write $v=c\1 + v_0$ where $v_0$ is orthogonal to $\1$. Note that $\Dist(v,\la\1\ra)=|v_0|$ and that $Av_0$ is also orthogonal to $\1$. We then obtain
\calc{
|\la Av, v\ra| = ||c|^2n + (1-\mu)\la Bv_0, v_0\ra| \leq |c|^2n + (1-\mu)|v_0|^2 = |v|^2 - \mu\Dist(v,\la\1\ra)^2.
}
This finishes the proof.
\end{proof}

We will also need another way of producing the same inequality of Lemma \ref{matrix-ineq-lemma} via spectral properties of $A$. Let 
$$
\sigma_1(A) = \sup \{|\sigma|: \sigma \text{ is an eigenvalue of } A\}
$$
and
$$
\sigma_2(A)=\sup \{|\sigma|:  \sigma \text{ is an eigenvalue of } A \text{ and } |\sigma|<\sigma_1(A) \}.
$$ 
Define the spectral gap of $A$ as follows
$$
\sg(A) = \sigma_1(A) - \sigma_2(A).
$$
If all the eigenvalues of $A$ have the same moduli define $\sg(A)=0$. Clearly, if $A$ is doubly stochastic then by Lemma \ref{matrix-ineq-lemma} we have $\sigma_1(A)=1$.

\begin{lemma}\label{matrix-lemma-symmetric}
Let $A=[a_{i,j}]_{i,j=1,...,n}$ be a doubly stochastic and symmetric matrix such that $\mu:=n\min_{i,j} \{a_{i,j}\}>0$. Then
$$
|\la Av, v\ra| \leq |v|^2 - \sg(A)\Dist(v,\la\1\ra)^2,
$$
for all $v\in\C^n$. Moreover:
\begin{enumerate}
\item $\sg(A)\geq \mu$;

\item Let $A^k = [a_{i,j}^{(k)}]_{i,j=1,...,n}$ denote the powers of $A$ and let $\theta\in[0,1]$. Then $\sg(A)\geq 1-\theta$ if and only if for some $C>0$ we have
$$
\sup_{i,j=1,...,n}|a_{i,j}^{(k)} - 1/n| \leq C\theta^k.
$$
\end{enumerate}
\end{lemma}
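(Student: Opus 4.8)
The plan is to diagonalise $A$ and read everything off its spectral resolution, the one substantial ingredient being the Perron--Frobenius theorem; we may assume $n\ge 2$ since for $n=1$ every assertion is trivial. First I would record the structure of $A$: being symmetric, $A$ is orthogonally diagonalisable with real eigenvalues and the subspace $\la\1\ra^{\perp}$ is $A$-invariant; being doubly stochastic, $A$ fixes $\1$, and the Cauchy--Schwarz computation already carried out in the proof of Lemma~\ref{matrix-ineq-lemma} gives $|Av|\le|v|$ for all $v$, so every eigenvalue has modulus at most $1$ and $\sigma_1(A)=1$. Since $\mu>0$ the matrix $A$ has all entries strictly positive, so Perron--Frobenius makes the eigenvalue $1$ simple and the unique eigenvalue of maximal modulus; hence the $1$-eigenspace is exactly $\la\1\ra$, we have $\sigma_2(A)<1$, and diagonalising $A$ on $\la\1\ra^{\perp}$ yields $|\la Av_0,v_0\ra|\le\sigma_2(A)|v_0|^2$ whenever $v_0\perp\1$. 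Note $\sg(A)=1-\sigma_2(A)$.

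For the displayed inequality I would write $v=c\1+v_0$ with $v_0\perp\1$, so that $|v|^2=|c|^2n+|v_0|^2$ and $\Dist(v,\la\1\ra)=|v_0|$. Since $A\1=\1$ and $Av_0\in\la\1\ra^{\perp}$ the cross terms vanish, giving $\la Av,v\ra=|c|^2n+\la Av_0,v_0\ra$; the triangle inequality together with the bound on $|\la Av_0,v_0\ra|$ then gives $|\la Av,v\ra|\le|c|^2n+|v_0|^2-(1-\sigma_2(A))|v_0|^2=|v|^2-\sg(A)\Dist(v,\la\1\ra)^2$.

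For part (1): if $\mu=1$ then $A=\tfrac1n\1\1^{t}$ and there is nothing to prove, so assume $\mu<1$ and set $B=(1-\mu)^{-1}(A-\tfrac{\mu}{n}\1\1^{t})$. As in the proof of Lemma~\ref{matrix-ineq-lemma} this $B$ is doubly stochastic, and it is clearly symmetric, so $|Bw|\le|w|$ for all $w$ and $B$ has spectral radius at most $1$. For $w\perp\1$ one has $\1\1^{t}w=0$, hence $Bw=(1-\mu)^{-1}Aw$; running this over an orthonormal eigenbasis of $A|_{\la\1\ra^{\perp}}$ shows that every eigenvalue $\sigma\ne1$ of $A$ satisfies $|\sigma|\le1-\mu$. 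Therefore $\sigma_2(A)\le1-\mu$, i.e.\ $\sg(A)\ge\mu$.

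For part (2): let $A=\tfrac1n\1\1^{t}+\sum_{l\ge2}\sigma_lP_l$ be the spectral resolution, where the $\sigma_l$ are the eigenvalues of $A$ on $\la\1\ra^{\perp}$ and the $P_l$ the associated orthogonal projections, so $A^{k}-\tfrac1n\1\1^{t}=\sum_{l\ge2}\sigma_l^{k}P_l$ for every $k$. If $\sg(A)\ge1-\theta$, i.e.\ $\sigma_2(A)\le\theta$, then $|a_{i,j}^{(k)}-1/n|\le\sigma_2(A)^{k}\sum_{l\ge2}|(P_l)_{i,j}|\le n\,\theta^{k}$ (using $|(P_l)_{i,j}|\le1$), which is the claimed bound with $C=n$. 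Conversely, suppose $\sup_{i,j}|a_{i,j}^{(k)}-1/n|\le C\theta^{k}$ for all $k$; if $\sigma\ne1$ is an eigenvalue of $A$ with unit eigenvector $w$ then $w\perp\1$, so $(A^{k}-\tfrac1n\1\1^{t})w=\sigma^{k}w$ and hence $|\sigma|^{k}=|(A^{k}-\tfrac1n\1\1^{t})w|\le n\max_{i,j}|a_{i,j}^{(k)}-1/n|\le nC\theta^{k}$; letting $k\to\infty$ forces $|\sigma|\le\theta$, so $\sigma_2(A)\le\theta$ and $\sg(A)\ge1-\theta$. The whole argument is elementary linear algebra; the only point requiring genuine care is the appeal to Perron--Frobenius, which is exactly what makes $1$ a simple eigenvalue and thus guarantees that $\sigma_2(A)$, rather than $1$, governs the action of $A$ on $\la\1\ra^{\perp}$ — without it the displayed inequality could be false.
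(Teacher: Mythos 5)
Your proof is correct and follows essentially the same spectral route as the paper: orthogonal diagonalisation, the decomposition $v=c\1+v_0$, the matrix $B=(1-\mu)^{-1}(A-\tfrac{\mu}{n}\1\1^t)$ for part (1), and tracking $A^k-\tfrac1n\1\1^t$ for part (2). The one cosmetic difference is that you invoke Perron--Frobenius to conclude $\sigma_2(A)<1$, whereas the paper stays self-contained by deducing this from Lemma~\ref{matrix-ineq-lemma} (if $Av=\lambda v$ with $|\lambda|=1$, that lemma forces $\Dist(v,\la\1\ra)=0$); likewise in the converse of part (2) you test against an eigenvector of second-largest modulus while the paper tests against $\1+v_2+v_n$, which amounts to the same thing.
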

\begin{proof}
Let ${v_1,v_2,...,v_n}$ be an orthogonal basis of eigenvectors of $A$ with eigenvalues $\lambda_1\geq \lambda_2\geq ...\geq \lambda_n$ (repeated according their multiplicity), where $v_1=\1$ and $\sigma_1(A)=\lambda_1=1$. The assumption $\mu>0$ in conjunction with Lemma \ref{matrix-ineq-lemma} implies that the only eigenvalue of modulus one is $\lambda_1=1$, which in turn implies that $\sigma_2(A)=\max\{\lambda_2,-\lambda_n\}<1$. Let $v=c_1\1+c_2v_2+...+c_nv_n$. Noting that $\Dist(v,\la\1\ra)=|v-c_1v_1|$ we obtain
\calc{
|\la Av, v\ra| = \sum_{i=1}^n \lambda_i|c_i|^2|v_i|^2 & = \sum_{i=1}^n |c_i|^2|v_i|^2 - \sum_{i=2}^n (1-\lambda_i)|c_i|^2|v_i|^2 \\ &  \leq |v|^2 - \sg(A)\Dist(v,\la\1\ra)^2.
}

Item $(1)$ is a trivial consequence of the spectral gap being always non-negative. It is easy to see that $B=[\frac{a_{i,j}-\mu/n}{1-\mu}]_{i,j=1,...,n}$ is symmetric and doubly stochastic and that
$\sg(B)=(\sg(A)-\mu)/(1-\mu)\geq 0$ (again, if $\mu=1$ then $a_{i,j}=1/n$ for all $i,j$ and this lemma is trivial).

We now prove item $(2)$.  Letting $v=\la v,\1\ra\1/n+c_2v_2+...+c_nv_n$ we deduce that
\lcalc{\label{eq-Ak-v}
|A^{k}v - \la v,\1\ra\1/n|=| \lambda_2^kc_2v_2+...+\lambda_n^kc_nv_n| = O(\sigma_2(A)^k).
}
In particular,
$$
a_{i,j}^{(k)} - 1/n = \la A^k e_i,e_j \ra - \la e_i,\1 \ra\la \1,e_j \ra/n = O(\sigma_2(A)^k).
$$
Thus if $\sg(A)\geq 1-\theta$ then $\sigma_2(A)\leq \theta$ and we obtain  $|a_{i,j}^{(k)} - 1/n|\leq C\theta^k$ for some $C>0$. Conversely, assume that $|a_{i,j}^{(k)} - 1/n|\leq C\theta^k$. Taking $v=\1+v_1+v_n$ it is easy to see from  \eqref{eq-Ak-v} that 
$$
|A^kv - \1|\geq c\sigma_2(A)^k,
$$
for some $c>0$. However, we also have
\calc{
|A^kv - \1|^2 = |A^k(v_2+v_n)|^2 & = \sum_{i=1}^n \bigg|\sum_{j=1}^n a_{i,j}^{(k)}\la v_2+v_n,e_j\ra\bigg|^2 \\ & = \sum_{i=1}^n \bigg|\sum_{j=1}^n (a_{i,j}^{(k)}-1/n)\la v_2+v_n,e_j\ra\bigg|^2 \\ & \leq C^2n^2\theta^{2k}|v_2+v_n|^2.
}
Thus $\theta\geq \sigma_2(A)$, that is $\sg(A)\geq 1-\theta$.  This finishes the proof. 
\end{proof}

\subsection{Laguerre polynomials}
In what follows we will need some of the results presented in \cite[Section 2.2.1]{Gon} to perform our analysis and for that reason we follow most of the notation used there. 

For any $\nu>-1$ we denote by $\{L_n^{\nu}(x)\}_{n\geq 0}$ the generalized Laguerre polynomials associated with the parameter $\nu$ (we write $L_n(x)=L^0_n(x)$ for simplicity). In the sense of \cite[Chapters 2 and 5]{Sz}, these are the orthogonal polynomials associated with the measure $e^{-x}x^\nu \dx$ ($x>0$) and normalized by the condition
$$
\int_0^\infty |L_n^{\nu}(x)|^2 \frac{e^{-x}x^\nu \dx}{\Gamma(\nu+1)} = L_n^\nu(0)= \binom {n+\nu} n.
$$
They are known to form an orthogonal basis in the space $L^2(\R_+,e^{-x}x^\nu \dx)$ and, as a consequence, this implies that for any given dimension $d$ the functions
\begin{align*}
\Psi_n^\nu(x) = L_n^{\nu}(2\pi |x|^2) e^{-\pi|x|^2},
\end{align*}
with $\nu=d/2-1$, form an orthogonal basis in $L^2_{rad}(\R^d)$ and
$$
\|\Psi_n^\nu\|^2_{L^2(\R^d)}=2^{-(\nu+1)}\binom {n+\nu} n.
$$
We simply write $\Psi_n$ when $\nu=0$. This implies that the set 
$$
\{\Psi_m^\nu(x)\Psi_n^\nu(y)\}_{m,n\geq 0}
$$ 
forms an orthogonal basis in $L^2_{rad}(\R^d)\otimes L^2_{rad}(\R^d)$, that is, the sub-space of functions $g:\R^d \times \R^d\to \C$ in $L^2(\R^d\times \R^d)$ such that $g(x,y)$ is radial in $x$ and $y$. Thus, any function $g\in L^2_{rad}(\R^d)\otimes L^2_{rad}(\R^d)$ can be  uniquely written in the form
$$
g(x,y) = \sum_{m,n\geq 0} \p(m,n)\Psi^\nu_m(x)\Psi^\nu_n(x),
$$
for some coefficients $\p(m,n)$.

Let $\G=\ell^2(\Z_+^2)$ be the Hilbert space of complex sequences $\{\p(a,b)\}_{a,b\geq 0}$ with norm
$$
\|\p\|_\G^2:=\sum_{a,b\geq 0} |\p(a,b)|^2 < \infty.
$$
and Hermitian inner product
$$
\langle \p,\psi \rangle_\G = \sum_{a,b\geq 0} \p(a,b)\ov{\psi(a,b)}.
$$
Let $\Q:\G\to \G$ be the operator
\lcalc{\label{Qfull-opt}
\Q\p(a,b) = \sum_{\stackrel{c,d\geq 0}{c+d=a+b}} \p(c,d)Q(a,b,c,d),
}
where
\begin{align*}
Q(a,b,c,d)=\int_0^\infty L_a(x/2)L_b(x/2)L_c(x/2)L_d(x/2)e^{-x}\d x.
\end{align*}
For any integer $S\geq 0$, let $\G_S$ denote the subspace of sequences $\p:\Z^2_+\to \C$ such that $\p(a,b)=0$ if $a+b\neq S$. Clearly, the collection of spaces $\{\G_S\}_{S\geq 0}$ is orthogonal and their direct sum is dense in $\G$. We also have that $\dim(\G_S)=S+1$ and $\Q(\G_S)\subset \G_S$. Letting $\Q_S$ denote the restriction of $\Q$ to the subspace $\G_S$, we conclude that the operator $\Q_S$ can be represented by the following matrix 
\begin{align}\label{matrix-rep-Q}
\Q_S=[Q(a,S-a,c,S-c)]_{a,c=0,{{...}},S}.
\end{align}
It turns out that we can use the operator $\Q$ to identify the quantities appearing in Theorem \ref{radial-thm-R4-gen}.  The next theorem is implicit in the proof of \cite[Theorem 6]{Gon}, but it can easily be deduce from it and that is why we omit the proof.

\begin{theorem}\label{QS-op-thm}
Let $g(x,y)=\sum_{m,n\geq 0} \p(m,n)\Psi_n(x)\Psi_n(y)$ belong to $L^2(\R^2\times \R^2)$, where $g(x,y)$ is radial in $x\in \R^2$ and $y\in \R^2$. Let $u(x,y,t)$ solve {\rm (SE)} in $\R^2\times \R^2$ with initial data $g(x,y)$. We have
\begin{equation*}
\int_{\R} \int_{\R^2} |u(x,x,t)|^2\dx \dt = \frac{1}{16}\langle \p, \Q\p \rangle_\G
\end{equation*}
and
\begin{equation*}
\frac{1}{4} \int_{\R^2\times \R^2} |g(x,y)|\dx\dy = \frac{1}{16}\|\p\|_\G^2.
\end{equation*}
Moreover, for any $S\geq 0$ the matrix $\Q_S$ at \eqref{matrix-rep-Q} is a positive semi-definite doubly stochastic matrix with strictly positive entries. In particular, we conclude that $\|\Q\|_{\G\to\G}=1$. Furthermore, a sequence $\p\in \G$ satisfies 
$$
\langle \p, \Q\p \rangle_\G = \|\p\|_\G^2
$$
if and only if it has the property that $\p(a,b)=\p(c,d)$ whenever $a+b=c+d$.
\end{theorem}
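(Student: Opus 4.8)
The plan is to prove the two displayed identities first and then read the matrix statements off from them, together with one positivity fact imported from \cite{Gon}. The second identity is immediate from orthogonality: $\{\Psi_m\otimes\Psi_n\}_{m,n\ge0}$ is an orthogonal family in $L^2(\R^2\times\R^2)$ with $\|\Psi_m\otimes\Psi_n\|_{L^2(\R^4)}^2=\|\Psi_m\|_{L^2(\R^2)}^2\|\Psi_n\|_{L^2(\R^2)}^2=\tfrac14$, so $\tfrac14\|g\|_{L^2(\R^4)}^2=\tfrac1{16}\|\p\|_\G^2$. For the first identity I would use the explicit action of the free propagator on the basis functions. Since $e^{it\Delta}$ acts on Gaussians by an elementary formula and $\sum_n\Psi_n(x)s^n=(1-s)^{-1}e^{-\pi|x|^2(1+s)/(1-s)}$, a short generating-function manipulation gives the Mehler-type identity
\[
(e^{it\Delta}\Psi_n)(x)=\frac{1}{1+i\tau}\Big(\frac{1-i\tau}{1+i\tau}\Big)^{\!n}e^{-\pi|x|^2/(1+i\tau)}\,L_n\!\Big(\frac{2\pi|x|^2}{1+\tau^2}\Big),\qquad\tau:=4\pi t.
\]
Expanding $g$ in the basis (so $u(x,x,t)=\sum_{m,n}\p(m,n)(e^{it\Delta}\Psi_m)(x)(e^{it\Delta}\Psi_n)(x)$) and $|u(x,x,t)|^2$ accordingly, the identity reduces to computing
\[
T(a,b,c,d):=\int_\R\!\!\int_{\R^2}(e^{it\Delta}\Psi_a)(e^{it\Delta}\Psi_b)\overline{(e^{it\Delta}\Psi_c)(e^{it\Delta}\Psi_d)}\,\dx\,\dt .
\]
Substituting the formula above, the integrand equals $(1+\tau^2)^{-2}\big(\tfrac{1-i\tau}{1+i\tau}\big)^{a+b-c-d}$ times the real Gaussian $e^{-4\pi|x|^2/(1+\tau^2)}$ times $L_aL_bL_cL_d$ evaluated at $X=2\pi|x|^2/(1+\tau^2)$. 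The spatial integral (polar coordinates, then the substitution $X=2\pi|x|^2/(1+\tau^2)$) collapses it to $\tfrac14(1+\tau^2)^{-1}\big(\tfrac{1-i\tau}{1+i\tau}\big)^{a+b-c-d}Q(a,b,c,d)$, since $\int_0^\infty L_aL_bL_cL_d\,e^{-2X}\,\d X=\tfrac12 Q(a,b,c,d)$ is the defining integral of $Q$ after $x=2X$. The time integral then contributes the orthogonality relation $\int_{-\pi/2}^{\pi/2}e^{-2ik\theta}\,\d\theta=\pi\,\mathbf{1}_{\{k=0\}}$ with $k=a+b-c-d$ (via $\tau=\tan\theta$), which both enforces $a+b=c+d$ and, after collecting constants, gives $T(a,b,c,d)=\tfrac1{16}Q(a,b,c,d)$ when $a+b=c+d$ and $0$ otherwise. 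Summing against $\p(a,b)\overline{\p(c,d)}$ yields $\int_\R\int_{\R^2}|u(x,x,t)|^2=\tfrac1{16}\langle\p,\Q\p\rangle_\G$; the vanishing for $a+b\neq c+d$ is exactly why $\Q(\G_S)\subset\G_S$.

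Turning to the matrix statements, reality and symmetry of $\Q_S$ are clear from the definition of $Q$, and positive semi-definiteness follows from the first identity: $\langle\p,\Q\p\rangle_\G=16\int_\R\int_{\R^2}|u(x,x,t)|^2\ge0$ for all $\p$, and $\Q=\bigoplus_S\Q_S$. Strict positivity of the entries $Q(a,b,c,d)$ is the one ingredient I would quote from \cite{Gon}, where it comes from an explicit positive evaluation of $Q$ by generating functions. For the row and column sums I would use the Laguerre addition formula $\sum_{c+d=S}L_c(u)L_d(v)=L_S^1(u+v)$, which shows that the level-$S$ constant sequence $\p\equiv\1$ corresponds to $g=\Psi_S^1$ regarded as a function of $(x,y)\in\R^4$, hence radial in $\R^4$. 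A Hundertmark--Zharnitsky-type Fourier representation of $\int_\R\int_{\R^2}|u(x,x,t)|^2$ --- obtained by Plancherel in $t$, polar coordinates in frequency, and a unit-Jacobian change of variables --- yields, via Cauchy--Schwarz on the angular average, both the bound $\int_\R\int_{\R^2}|u(x,x,t)|^2\le\tfrac14\|g\|_{L^2(\R^4)}^2$ for all $g$ (so $\|\Q\|_{\G\to\G}\le1$) and equality whenever $\widehat g$ is radial in $\R^4$, as here. Therefore $\langle\1,\Q_S\1\rangle=\|\1\|^2$, which with $\Q_S$ positive semi-definite, symmetric and of norm $\le1$ forces $\Q_S\1=\1$, and then $\Q_S^t\1=\1$ by symmetry; with positivity of the entries, $\Q_S$ is doubly stochastic, and $\|\Q\|_{\G\to\G}=1$ follows (e.g.\ from Lemma \ref{matrix-ineq-lemma}). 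Finally, each $\Q_S$ is symmetric doubly stochastic with strictly positive entries, so by Perron--Frobenius its eigenvalue $1$ is simple with eigenvector $\1$ and every other eigenvalue lies in $(-1,1)$; hence $\langle\p_S,\Q_S\p_S\rangle=\|\p_S\|^2$ if and only if $\p_S$ is a multiple of $\1$, i.e.\ $\p$ is constant on the $S$-th level set, which gives the last assertion after summing over $S$.

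The main obstacle is the first identity: deriving the Mehler-type formula for $e^{it\Delta}\Psi_n$ and, above all, carrying all the constants correctly through the spatial and temporal integrations --- the time integration being the delicate step, since this is what forces the diagonal constraint $a+b=c+d$ and pins down the exact factor $\tfrac1{16}$; in practice one may instead simply cite \cite[Theorem 6]{Gon} and its proof, which contain this computation. A secondary technical point is the strict positivity of the entries $Q(a,b,c,d)$, which is not visible from the bilinear representation and is the one external input I would take from \cite{Gon}. Routine measure-theoretic interchanges (justified by the Strichartz estimate for the density of finite basis expansions) would also need to be checked.
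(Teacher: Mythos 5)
Your proposal is correct. Note that the paper itself gives no proof of Theorem \ref{QS-op-thm}: it is quoted as implicit in the proof of \cite[Theorem 6]{Gon}. Your derivation of the two identities --- the Mehler-type formula for $e^{it\Delta}\Psi_n$, the reduction of the space integral to $\int_0^\infty L_aL_bL_cL_d\,e^{-2X}\,\d X=\tfrac12Q(a,b,c,d)$, and the time integral $\int_{-\pi/2}^{\pi/2}e^{-2ik\theta}\d\theta=\pi\mathbf{1}_{\{k=0\}}$ forcing $a+b=c+d$ --- is essentially the computation underlying \cite{Gon}, and your constants check out: $T(a,b,c,d)=\tfrac1{16}Q(a,b,c,d)\mathbf{1}_{\{a+b=c+d\}}$, $\|\Psi_m\otimes\Psi_n\|^2_{L^2(\R^4)}=\tfrac14$. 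Where you genuinely diverge is the double stochasticity: you get $\Q_S\1=\1$ by combining the Hundertmark--Zharnitsky representation (the bound $\int_\R\int_{\R^2}|u(x,x,t)|^2\,\dx\,\dt\le\tfrac14\|g\|_{L^2(\R^4)}^2$ plus equality for the radial datum $\Psi^1_S$) with positive semi-definiteness and $\|\Q_S\|\le1$, whereas the route taken in the paper for the analogous matrix $\K_S$ (Step 5 of the proof of Lemma \ref{crucil-rep-lemma}) and in \cite{Gon} is a direct computation: by the addition formula \eqref{add-formula}, $\sum_{c+d=S}Q(a,b,c,d)=\int_0^\infty L_a(x/2)L_b(x/2)L^1_S(x)e^{-x}\dx$, and since $L_a(x/2)L_b(x/2)$ has degree $S$ and value $1$ at $x=0$ while $L^1_S$ is orthogonal with respect to $xe^{-x}\dx$ to polynomials of degree $<S$, this equals $\int_0^\infty L^1_S(x)e^{-x}\dx=1$. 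The direct computation is more self-contained (no import of \cite{HZ}); your argument has the merit of explaining the row sums conceptually through the equality case of the Strichartz identity, but it makes the theorem depend on an external sharp representation that the theorem itself is meant to replace in this paper's scheme. Strict positivity of the entries is an external input either way (the Gillis--Zeilberger/Askey--Ismail--Koornwinder positivity quoted in Section \ref{numerics}, or \cite{Gon}), and your Perron--Frobenius treatment of the equality case is equivalent to what Lemma \ref{matrix-lemma-symmetric} provides.
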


Let $\G_{rad}$ denote the subspace of sequences $\p\in\G$ such that $\p(a,b)=\p(c,d)$ whenever $a+b=c+d$. In the same way as before, if $\Gamma\subset \G$ is a set we define
$$
\Dist(\p,\Gamma) = \inf \{\|\p-\psi\|_\G:\psi\in \Gamma\}.
$$
We have the following lemma.

\begin{lemma}\label{Dist-G-rep-lemma}
Let $g(x,y)=\sum_{m,n\geq 0} \p(m,n)\Psi_m(x)\Psi_n(y)$  belong to $L^2_{rad}(\R^2) \otimes L^2_{rad}(\R^2)$. Then
\calc{
\Dist\left(g,L^2_{rad}(\R^4)\right)^2 =\frac{1}{4}\Dist(\p,\G_{rad})^2.
}
\end{lemma}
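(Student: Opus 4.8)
The plan is to reduce the $L^2(\R^4)$ distance to a coefficient‑space computation by noting that the orthogonal basis $\{\Psi_m^\nu(x)\Psi_n^\nu(y)\}$ for $L^2_{rad}(\R^2)\otimes L^2_{rad}(\R^2)$, with $\nu=0$, does \emph{not} consist of radial functions on $\R^4$, but that $L^2_{rad}(\R^4)$ sits inside $L^2_{rad}(\R^2)\otimes L^2_{rad}(\R^2)$ as the subspace of functions depending only on $|x|^2+|y|^2$. First I would identify this subspace in terms of Laguerre expansions: a radial function $h$ on $\R^4$ can be written $h(x,y)=\sum_{S\ge0} c_S\,\Psi_S^{1}\big(\!\sqrt{|x|^2+|y|^2}\,\big)$ using the $\nu=1$ Laguerre basis on $\R^4$, and one expands $L_S^1(2\pi(|x|^2+|y|^2))e^{-\pi(|x|^2+|y|^2)}$ in the tensor basis $\Psi_m(x)\Psi_n(y)$. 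The key algebraic fact (the Laguerre addition formula, or equivalently $\sum_{m+n=S}L_m(u)L_n(v)=L_S^1(u+v)$ after the appropriate normalization) shows that $\Psi_S^1(\sqrt{|x|^2+|y|^2})$ expands with coefficient function $\p(m,n)$ that is \emph{constant} on each diagonal $m+n=S$ and supported there. Hence, under the isometry $g\mapsto \p$ (which, by the norm identity in Theorem~\ref{QS-op-thm}, scales the squared norm by $\tfrac14$: $\|g\|_{L^2(\R^4)}^2=\tfrac14\|\p\|_\G^2$), the image of $L^2_{rad}(\R^4)$ is exactly $\G_{rad}$.

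With that identification the lemma is essentially immediate: the distance from $g$ to a closed subspace equals the coefficient‑space distance from $\p$ to the image subspace, up to the fixed isometry constant. Concretely, for any $h\in L^2_{rad}(\R^4)$ with coefficient sequence $\psi\in\G_{rad}$ we have $\|g-h\|_{L^2(\R^4)}^2=\tfrac14\|\p-\psi\|_\G^2$, and taking the infimum over $h$ (equivalently over $\psi\in\G_{rad}$, since the map is onto $\G_{rad}$) gives
\[
\Dist\!\left(g,L^2_{rad}(\R^4)\right)^2=\tfrac14\,\Dist(\p,\G_{rad})^2.
\]
One should note the minor point that $\G_{rad}\cap\G$ need not be closed as the set of \emph{finitely supported} diagonal‑constant sequences, but its closure consists of all $\ell^2$ sequences constant on diagonals, which corresponds precisely to $L^2_{rad}(\R^4)$ (a closed subspace), so the infima match.

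The main obstacle I anticipate is the bookkeeping in step one: verifying that the tensor‑basis expansion of the $\nu=1$ radial basis on $\R^4$ lands exactly in $\G_{rad}$ with the right normalization constants, i.e.\ reconciling the three normalizations in play ($\|\Psi_n^0\|^2_{L^2(\R^2)}=2^{-1}$, $\|\Psi_S^1\|^2_{L^2(\R^4)}=2^{-2}(S+1)$, and $\dim\G_S=S+1$) so that a diagonal‑constant sequence $\p\equiv c_S$ on $m+n=S$ has $\|\p\|_\G^2$ matching $4\|h\|_{L^2(\R^4)}^2$. This is exactly the content hiding inside Theorem~\ref{QS-op-thm}'s norm identity, so in practice I would invoke that identity together with the Laguerre summation formula rather than recompute Gaussian integrals; the dimension count $\dim\G_S=S+1$ makes the final characterization of $\G_{rad}$ as the $\ell^2$ closure of diagonal‑constant sequences transparent.
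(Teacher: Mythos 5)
Your proposal is correct and takes essentially the same route as the paper: both hinge on the Laguerre summation formula $\Psi_S^1=\sum_{a+b=S}\Psi_a\otimes\Psi_b$ to identify $L^2_{rad}(\R^4)$ with $\G_{rad}$ under the coefficient map $g\mapsto\p$, together with the normalization $\|g\|^2_{L^2(\R^4)}=\tfrac14\|\p\|^2_\G$. The paper writes out both projections explicitly rather than invoking the scaled-isometry principle, and your closedness caveat is unnecessary since $\G_{rad}$ is by definition the full (closed) subspace of $\ell^2$ sequences constant on diagonals; but these are presentational differences, not a different argument.
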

\begin{proof}
Since $$\left\{\frac{\Psi^1_S}{\sqrt{(S+1)/4}}\right\}_{S\geq 0}$$ is an orthonormal basis of $L^2_{rad}(\R^4)$ we have
\calc{
\Dist\left(g,L^2_{rad}(\R^4)\right)^2 = \left\|g-  P_{rad}(g) \right\|_{L^2(\R^4)}^2,
}
where 
$$
P_{rad}(g) = \sum_{S\geq 0} \frac{\la g, \Psi^1_S\ra_{L^2(\R^4)}}{(S+1)/4} {\Psi^1_S}
$$
is the projection of $g$ in the space $L^2_{rad}(\R^4)$. An important formula related to Laguerre polynomials is the summation formula \eqref{add-formula}, which implies that
\calc{
\Psi_S^1 = \sum_{a+b=S} \Psi_a \otimes \Psi_b.
}
Using the above formula we obtain
\calc{
\la g, \Psi^1_S\ra_{L^2(\R^4)} & = \sum_{n,m\geq 0} \sum_{a+b=S} \p(m,n)\int_{\R^2} \Psi_n(x)\Psi_a(x)\dx \int_{\R^2}\Psi_m(y)\Psi_b(y)\dy \\
& = \sum_{a+b=S} \p(a,b)\int_{\R^2} \Psi_a(x)^2\dx\int_{\R^2} \Psi_b(y)^2\dy \\
& = \frac{1}{4}  \sum_{a+b=S} \p(a,b).
}
We conclude that
$$
P_{rad}(g) =  \sum_{S\geq 0}\left(\frac{\sum_{a+b=S} \p(a,b)}{S+1}\right) \Psi^1_S.
$$
This implies that
\calc{
\Dist^2&\left(g,  L^2_{rad}(\R^4)\right) \\ &= \left\|\sum_{n,m\geq 0} \p(m,n)\Psi_n\otimes\Psi_m -  \sum_{S\geq 0}\left(\frac{\sum_{a+b=S} \p(a,b)}{S+1}\right) \Psi^1_S \right\|_{L^2(\R^4)}^2 \\
& =  \left\|\sum_{S\geq 0} \sum_{n+m=S}\left[ \left(\p(m,n) - \frac{\sum_{a+b=S} \p(a,b)}{S+1}\right)\Psi_n\otimes\Psi_m\right]  \right\|_{L^2(\R^4)}^2 \\
& = \frac{1}{4}  \sum_{S\geq 1} \sum_{m+n=S} \bigg|\p(m,n) -  \frac{\sum_{a+b=S} \p(a,b)}{S+1}\bigg|^2
}

On the other hand, let $P_S:\G\to\G_S$ denote the projection onto the space $\G_S$. Let  $\1_S:\Z^2_+\to\C$ be defined as $\1_S(a,b)=1$ if $a+b=S$ and $\1_S(a,b)=0$ if $a+b\neq S$. Letting $\Dist(\p,\la\1_S\ra)=\inf_{\lambda\in\C}\{\|\p-\lambda\1_S\|_\G\}$ we obtain
\lcalc{\label{dist_G-crucial-rel}
 \Dist^2(\p,\G_{rad}) & = \sum_{S\geq 1} \Dist^2(P_S(\p),\la\1_S\ra) \\ & = \sum_{S\geq 1} \sum_{a+b=S} \bigg|\p(a,b) -  \frac{\sum_{c+d=S} \p(c,d)}{S+1}\bigg|^2.
}
This finishes the proof.
\end{proof}

Theorem \ref{QS-op-thm} and Lemma \ref{Dist-G-rep-lemma} imply that Theorem \ref{radial-thm-R4-gen} is equivalent to the inequality
\lcalc{\label{ineq-in-G}
\langle \p, \Q\p \rangle_\G \leq \|\p\|_\G^2 - \gamma\Dist(\p,\G_{rad})^2
}
for all $\p\in\G$ (the constant $\gamma$ above being the same as in inequality \eqref{michael-christ-ineq-gen}). By Lemma \ref{matrix-lemma-symmetric} we have
\calc{
\langle \p, \Q\p \rangle_\G \leq \|\p\|_\G^2 - \sg(Q_S) \Dist(\p,\la\1_S\ra)^2,
}
for all $\p\in\G_S$. Using identity \eqref{dist_G-crucial-rel} in conjunction with the fact that the spaces $\G_S$ decompose $\G$ into a sum of mutually orthogonal subspaces  we obtain
\calc{
\langle \p, \Q\p \rangle_\G & = \sum_{S\geq 0} \langle P_S(\p), \Q P_S(\p) \rangle_\G \\ &  \leq \sum_{S\geq 0} \|P_S(\p)\|_\G^2 - \sum_{S\geq 1}\sg(\Q_S) \Dist(P_S(\p),\la\1_S\ra)^2 \\
& \leq \sum_{S\geq 0} \|P_S(\p)\|_\G^2 - \inf_{S\geq 1}\{\sg(\Q_S)\}\sum_{S\geq 1} \Dist(P_S(\p),\la\1_S\ra)^2 \\
& = \|\p\|_\G^2 - \inf_{S\geq 1}\{\sg(\Q_S)\}\Dist(\p,\G_{rad}).
}
Hence, if $\inf_{S\geq 1}\{\sg(\Q_S)\}>0$ then inequality \eqref{ineq-in-G} holds with 
$$
\gamma=\inf_{S\geq 1}\{\sg(\Q_S)\}.
$$
Thus if we prove that the sequence of matrices $\Q_S$ have spectral gaps uniformly bounded away from zero we prove Theorem \ref{radial-thm-R4-gen}. We compile this information in the following Lemma.

\begin{lemma}\label{spectral-gap-lemma}
If 
$$
\inf_{S\geq 1}\left\lbrace\sg(\Q_S)\right\rbrace \geq \delta >0
$$
then Theorem \ref{radial-thm-R4-gen} holds with
$$
\gamma={\delta}.
$$
\end{lemma}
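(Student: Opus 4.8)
The plan is to observe that the reduction needed for this lemma has essentially been assembled in the discussion preceding its statement, so that its proof amounts to running that reduction under the quantitative hypothesis $\inf_{S\ge1}\{\sg(\Q_S)\}\ge\delta>0$. First I would recall that Theorem \ref{QS-op-thm} together with Lemma \ref{Dist-G-rep-lemma} shows that the inequality \eqref{michael-christ-ineq-gen} of Theorem \ref{radial-thm-R4-gen} is equivalent to the Hilbert-space inequality \eqref{ineq-in-G}, that is, to $\la\p,\Q\p\ra_\G\le\|\p\|_\G^2-\gamma\Dist(\p,\G_{rad})^2$ for every $\p\in\G$; hence it suffices to establish \eqref{ineq-in-G} with $\gamma=\delta$.

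To prove \eqref{ineq-in-G}, I would fix $\p\in\G$ and decompose it orthogonally as $\p=\sum_{S\ge0}P_S(\p)$ with $P_S(\p)\in\G_S$. Since $\Q$ leaves each finite-dimensional subspace $\G_S$ invariant (Theorem \ref{QS-op-thm}) and $\|\Q\|_{\G\to\G}=1$, one has $\la\p,\Q\p\ra_\G=\sum_{S\ge0}\la P_S(\p),\Q_S P_S(\p)\ra_\G$ with absolute convergence. Now each $\Q_S$ is doubly stochastic with strictly positive entries (Theorem \ref{QS-op-thm}) and is symmetric, because the Laguerre integral $Q(a,b,c,d)$ is invariant under permutations of its arguments; therefore Lemma \ref{matrix-lemma-symmetric} applies and gives $|\la P_S(\p),\Q_S P_S(\p)\ra_\G|\le\|P_S(\p)\|_\G^2-\sg(\Q_S)\Dist(P_S(\p),\la\1_S\ra)^2$ for each $S$. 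Summing over $S$, dropping the $S=0$ term (where $\G_0$ is one-dimensional, so the distance term vanishes), and using $\sg(\Q_S)\ge\delta$ for $S\ge1$, I get $\la\p,\Q\p\ra_\G\le\|\p\|_\G^2-\delta\sum_{S\ge1}\Dist(P_S(\p),\la\1_S\ra)^2$. Finally, the identity \eqref{dist_G-crucial-rel} identifies $\sum_{S\ge1}\Dist^2(P_S(\p),\la\1_S\ra)$ with $\Dist^2(\p,\G_{rad})$, which yields \eqref{ineq-in-G} with $\gamma=\delta$, and hence Theorem \ref{radial-thm-R4-gen} with the same constant.

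There is no genuine obstacle internal to this lemma: its proof is bookkeeping that combines the spectral-gap estimate of Lemma \ref{matrix-lemma-symmetric}, the orthogonal splitting of $\G$ into the subspaces $\G_S$, and the distance identity \eqref{dist_G-crucial-rel}. The real work — and the reason the spectral-gap bound appears here as a hypothesis rather than as a conclusion — is to show that $\inf_{S\ge1}\{\sg(\Q_S)\}$ is bounded below by an explicit positive constant (the paper's $\delta=4\pi^{-2}$); establishing that lower bound for the Laguerre matrices $\Q_S$ uniformly in $S$ is the substantial part of the argument and is carried out separately from this lemma.
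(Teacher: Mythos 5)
Your proposal is correct and follows essentially the same route as the paper: the paper's own justification of Lemma~\ref{spectral-gap-lemma} consists precisely of the chain of inequalities you describe — applying Lemma~\ref{matrix-lemma-symmetric} on each block $\G_S$, pulling out $\inf_{S\ge1}\{\sg(\Q_S)\}$, and invoking the distance identity \eqref{dist_G-crucial-rel} together with the orthogonal decomposition $\G=\bigoplus_S\G_S$. The one small addition you make — explicitly justifying the symmetry of $\Q_S$ via permutation invariance of $Q(a,b,c,d)$, and explicitly noting the $S=0$ term contributes nothing to the distance — is sound and matches what the paper leaves implicit (it cites positive semi-definiteness in Theorem~\ref{QS-op-thm}, which already entails symmetry).
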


\section{Proof of Theorem \ref{radial-thm-R4-gen}}\label{proof}
Consider the matrix $\Q_S$ defined in \eqref{matrix-rep-Q}. We will show that the spectral gap of $\Q_S$ is uniformly bounded from below. In particular, we will show that 
\lcalc{\label{Q-spectral-ineq}
\sg(\Q_S) \geq \frac{4}{\pi^2}
}
for all $S\geq 1$. Hence, by the Lemma \ref{spectral-gap-lemma} we conclude that Theorem \ref{radial-thm-R4-gen} is true with $\gamma=\frac{4}{\pi^2}$. We note that the above lower bound is not best possible and numerical computations show that $\sg(\Q_1)=1$ and suggest that
$$
\sg(\Q_S)=3/4
$$
for all $S\geq 2$. We address this issue in the Section \ref{numerics}.

\noindent {\bf Step 1.} Let $\Q_S^\ell=[Q^{(\ell)}(a,S-a,c,S-c)]_{a,c=0,...,S}$ denote the powers of matrix $\Q_S$. For simplicity we write $L_{a_1a_2...a_n}(x)=L_{a_1}(x)L_{a_2}(x)...L_{a_n}(x)$ for given integers $a_1,a_2,...,a_n$. We then deduce that
\calc{
Q^{(2)}(a,b,&c,d)\\  & = \int_0^\infty L_{ab}(x/2)\int_0^\infty \bigg[\sum_{m+n=S}L_{nm}(x/2)L_{mn}(y/2)\bigg]L_{cd}(y/2)e^{-y}\dy e^{-x}\dx
}
if $a+b=c+d=S$. Let
$$
K_S(x,y)=\sum_{m+n=S}L_{nm}(x/2)L_{mn}(y/2)
$$
and define the following kernel operator over $L^2([0,\infty),e^{-x}\dx)$
\lcalc{\label{KS-def}
\K_S(f)(x) = \int_0^\infty K_S(x,y)f(y)e^{-y}\dy.
}
We conclude that
$$
Q^{(2)}(a,b,c,d) = \int_0^\infty L_{ab}(x/2) \K_S(L_{cd}(\cdot/2))(x)e^{-x}\dx
$$
if $a+b=c+d=S$. It is now a straightforward calculation to deduce that
\lcalc{\label{Q-powers-rep}
Q^{(\ell+1)}(a,b,c,d) = \int_0^\infty L_{ab}(x/2) \K_S^{(\ell)}(L_{cd}(\cdot/2))(x)e^{-x}
}
for any integer $\ell \geq 1$, where $\K_S^{(\ell)}$ is the $\ell$-fold composition of $\K_S$.

\noindent {\bf Step 2.} We are going to perform a spectral analysis on $\K_S$ which ultimately will give us spectral information about matrix $\Q_S$. In this direction, we will need to represent $\K_S$ in the basis $\{L_n(x)\}_{n\geq 0}$. We have the following lemma that we postpone the proof for the final steps.

\begin{lemma}\label{crucil-rep-lemma}
If $n>S$ then 
\lcalc{\label{KS-zeros}
\K_S(L_n) = 0.
}
If $0\leq m,n\leq S$ we have
\lcalc{\label{coeff-K-S}
\int_0^\infty L_m(x)\K_S(L_n)(x)e^{-x}\dx =\frac{1}{4^S} \sum {\binom {2i} i}{\binom {2j} j}{\binom {2u} u}{\binom {2v} v},
}
where the summation is taken over $i,j,u,v\geq 0$ such that $i+j=S-n$, $u+v=n$ and $j+v=m$. Moreover, if $\kappa_{m,n}$ is the quantity on the left hand side of \eqref{coeff-K-S} then the matrix $ [\kappa_{m,n}]_{m,n=0,...,S}$ is symmetric and doubly stochastic.
\end{lemma}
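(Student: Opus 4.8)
The plan is to unwind the definition of $\K_S$ acting on the Laguerre basis and reduce everything to a single classical Laguerre identity. First I would recall the linearization/summation formula for products of Laguerre polynomials: the generating function $\sum_{n\geq 0} L_n(x) z^n = (1-z)^{-1} e^{-xz/(1-z)}$ together with the identity $\int_0^\infty L_m(x)L_n(x)e^{-x}\dx = \delta_{m,n}$, and the expansion of a product $L_a(x/2)L_b(x/2)$ in the basis $\{L_k(x)\}$. The cleanest route is to work entirely with generating functions in four auxiliary variables, one for each of the four Laguerre factors appearing in $Q(a,b,c,d)$, which is exactly the technology of \cite{Gon}. Concretely, $K_S(x,y) = \sum_{m+n=S} L_n(x/2)L_m(x/2)L_m(y/2)L_n(y/2)$, so testing against $L_m(x)$ and $L_n(y)$ produces the quantity $\kappa_{m,n}$ after integrating out $x$ and $y$ against $e^{-x}\dx$ and $e^{-y}\dy$.

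The key steps, in order. \textbf{(i)} Establish \eqref{KS-zeros}: since $K_S(x,y)$ is, as a polynomial in $x$, a sum of products $L_n(x/2)L_m(x/2)$ with $n+m=S$, each of which has degree $S$ in $x$, the function $x\mapsto \K_S(L_n)(x)$ lies in the span of $\{L_0,\dots,L_S\}$; hence its inner product with $L_n$ for $n>S$ vanishes. \textbf{(ii)} Compute $\int_0^\infty L_a(x/2)L_b(x/2)L_m(x)e^{-x}\dx$ in closed form. Using the generating function twice for the $x/2$ factors and once for the $x$ factor, the $x$-integral is Gaussian-type and evaluates to a rational function whose Taylor coefficients are products of central binomial coefficients $\binom{2i}{i}4^{-i}$; this is where the $\binom{2i}{i}\binom{2j}{j}$ pattern and the $4^{-S}$ normalization come from, and where the index constraints $i+j$ tied to the degrees enter. \textbf{(iii)} Assemble: multiply the two such evaluations (one from the $x$-side with factors indexed by $n,m$ against test index... more precisely the $S-n$ and $n$ splittings) and sum over the constraint $n+m'=S$ defining $K_S$; matching exponents forces exactly the stated constraints $i+j=S-n$, $u+v=n$, $j+v=m$, giving \eqref{coeff-K-S}. \textbf{(iv)} Symmetry and double stochasticity: symmetry $\kappa_{m,n}=\kappa_{n,m}$ follows either from the symmetry of the summation set under an involution swapping the roles of the index blocks, or more conceptually from the fact that $\K_S$ is built from the symmetric kernel $K_S(x,y)=K_S(y,x)$ and a self-adjoint integral operator; positivity of $\kappa_{m,n}$ is immediate from the binomial-sum formula. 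For the row sums, $\sum_{m=0}^S \kappa_{m,n} = \int_0^\infty \big(\sum_{m=0}^S L_m(x)\big)\K_S(L_n)(x) e^{-x}\dx$, and since $L_n$ integrates against $e^{-x}\dx$ to $\delta_{n,0}\cdot(\text{const})$ one reduces to showing $\K_S(L_n)$ has unit "total mass" in the appropriate sense; alternatively, note $\Q_S$ is doubly stochastic by Theorem \ref{QS-op-thm}, its square $\Q_S^2$ is then doubly stochastic, and $\kappa_{m,n}$ is (up to the $L_{ab}\leftrightarrow L_m$ change of basis) an entry of $\Q_S^2$ expressed in the Laguerre basis — so double stochasticity can be quoted rather than recomputed, once one checks the change of basis $a\mapsto(a,S-a)$ versus the single index $m$ is handled correctly via step (ii).

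\textbf{Main obstacle.} The bookkeeping in step (iii) — correctly tracking which binomial coefficient is indexed by which pair and verifying that the exponent-matching in the generating-function product yields precisely the three stated constraints $i+j=S-n$, $u+v=n$, $j+v=m$ (and not some permutation of them) — is the delicate part; it is routine in spirit but error-prone, and a sign or index slip there propagates into the spectral estimate \eqref{Q-spectral-ineq}. A secondary subtlety is justifying the interchange of summation and integration when manipulating the generating functions, which is harmless since all sums are finite once $S$ is fixed (the $x$-degree is at most $S$), but it should be noted. Establishing double stochasticity directly from the binomial formula, rather than quoting $\Q_S^2$, would require a combinatorial identity (a Vandermonde-type convolution of central binomial coefficients summing to $4^S$); invoking Theorem \ref{QS-op-thm} sidesteps this and is the approach I would take.
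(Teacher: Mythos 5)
Your proposal misses the key mechanism of the paper's proof, and the step you view as ``routine bookkeeping'' actually fails as described. The paper's proof hinges on writing the generating kernel $G(x,y;w)=\sum_S K_S(x,y)w^S$ as the \emph{square} $P(x/2,y/2;w)^2$ of the Poisson kernel, and then invoking Watson's identity $I_0(z)^2=\fint_0^{\pi/2}I_0(2z\cos t)\,\dt$ to linearize that square; this produces the integral representation
$\K_S(L_n)(x)=\fint_0^{\pi/2}L_{S-n}(x\sin^2 t)\,L_n(x\cos^2 t)\,\dt$,
and only after introducing the auxiliary $t$-integral does the triple generating function collapse to
$T(w_1,w_2,w_3)=[(1-w_1)(1-w_2w_3)(1-w_2)(1-w_1w_3)]^{-1/2}$,
whose $(1-w)^{-1/2}$ factors are what produce the four central binomial coefficients with the constraints $i+j=S-n$, $u+v=n$, $j+v=m$. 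Your step (ii) proposes instead to evaluate $\int_0^\infty L_a(x/2)L_b(x/2)L_m(x)e^{-x}\dx$ directly and claims its Taylor coefficients are products of $\binom{2i}{i}4^{-i}$. That claim is false: the associated generating function is the reciprocal of
$D=1-\tfrac{w_1}{2}-\tfrac{w_2}{2}-\tfrac{w_1w_3}{2}-\tfrac{w_2w_3}{2}+w_1w_2w_3$,
which is a single rational factor and does not decompose into a product of $(1-\cdot)^{-1/2}$'s. Without the Bessel linearization you do not reach \eqref{coeff-K-S}.

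Two further, smaller gaps. First, your argument in step (i) proves that $\K_S(L_n)$ lies in $\mathrm{span}\{L_0,\dots,L_S\}$ and hence that $\kappa_{m,n}=0$ for $m>S$; but \eqref{KS-zeros} asserts $\K_S(L_n)=0$ as a function when $n>S$. The correct one-line argument applies the same degree count to the \emph{$y$}-variable: $K_S(x,\cdot)$ is a polynomial of degree $S$, so it is orthogonal to $L_n$ for $n>S$ with respect to $e^{-y}\dy$. Second, your fallback for double stochasticity — ``$\kappa_{m,n}$ is an entry of $\Q_S^2$ in a different basis, and $\Q_S^2$ is doubly stochastic'' — does not hold up. The relation is $\Q_S^2(a,b,c,d)=\sum_{m,n}p_m(a,b)\,p_n(c,d)\,\kappa_{m,n}$, where $p_m(a,b)$ are the coefficients of $L_{ab}(x/2)$ in the $\{L_n\}$ basis; this is a congruence by a matrix that is neither orthogonal nor a permutation, so double stochasticity does not transfer. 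The paper's argument is genuinely needed: it shows directly that $L_S^1=\sum_{n=0}^S L_n$ is a fixed vector of $\K_S$ (via the integral representation and the summation formula \eqref{add-formula}), then uses $L_m(0)=1$ and orthogonality of $L_S^1$ to lower-degree polynomials to get row sums equal to $1$; symmetry of $[\kappa_{m,n}]$ follows because the kernel $K_S$ is real and symmetric, which you did note.
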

By Lemma \ref{crucil-rep-lemma} the operator $\K_S$ has a finite dimensional range and can be represented by the following matrix (with an abuse of notation)
$$
\K_S = [\kappa_{m,n}]_{m,n=0,...,S}
$$
where
$$
\kappa_{m,n} = \int_0^\infty L_m(x)\K_S(L_n)(x)e^{-x}\dx.
$$
Recall that $\int_0^\infty L_n(x)^2e^{-x}\dx=1$, hence, roughly speaking, each $L_n(x)$ works as the coordinate vector $e_n$. We claim that
\calc{
\kappa_{m,n}\geq \frac{4}{\pi^2(S+1)}.
}
First, we have the following inequality (which can be derived from Stirling's formula)
\lcalc{\label{stirlin-ineq}
\binom {2p} p \frac{1}{4^p}\geq \frac{1}{\sqrt{\pi(p+1/2)}}
}
for all $p\geq 0$ and the quotient between both sides above converge to $1$ as $p\to \infty$. Using this inequality in conjunction with identity \eqref{coeff-K-S} we obtain
\lcalc{\label{ineq-0}
\pi^2\kappa_{m,n}\geq \sum \frac{1}{\sqrt{(i+1/2)(j+1/2)(u+1/2)(v+1/2)}}
}
where the summation is taken over $i,j,u,v\geq 0$ such that $i+j=S-n$, $u+v=n$ and $j+v=m$. Note that these conditions imply that $i+u=S-m$. Secondly, using inequality $1/\sqrt{ts}\geq 2/(t+s)$ for $t,s>0$, we  obtain
\lcalc{\label{ineq-1}
\frac{1}{\sqrt{(i+1/2)(j+1/2)(u+1/2)(v+1/2)}} &\geq \frac{4}{(i+u+1)(j+v+1)} \\ & = \frac{4}{(S-m+1)(m+1)},
}
and
\lcalc{\label{ineq-2}
\frac{1}{\sqrt{(i+1/2)(j+1/2)(u+1/2)(v+1/2)}} &\geq \frac{4}{(i+j+1)(u+v+1)} \\ & = \frac{4}{(S-n+1)(n+1)}.
}
We separate our argument in cases. If $m\leq  \min\{n,S-n\}$ then by  \eqref{ineq-0} and \eqref{ineq-1} we have
$$
\pi^2\kappa_{m,n}\geq  \frac{4}{(S-m+1)(m+1)} \sum_{j=0}^m1 = \frac{4}{S-m+1} \geq \frac{4}{S+1}.
$$
If $m>  \max\{n,S-n\}$ then by  \eqref{ineq-0} and \eqref{ineq-1} we have
$$
\pi^2\kappa_{m,n}\geq  \frac{4}{(S-m+1)(m+1)} \sum_{j=m-n}^{S-n}1 = \frac{4}{m+1} \geq \frac{4}{S+1}.
$$
If $S-n<m\leq n$ then by  \eqref{ineq-0} and \eqref{ineq-2} we have
$$
\pi^2\kappa_{m,n}\geq  \frac{4}{(S-n+1)(n+1)} \sum_{j=0}^{S-n}1 = \frac{4}{n+1} \geq \frac{4}{S+1}.
$$
If $n<m\leq S-n$ then by  \eqref{ineq-0} and \eqref{ineq-2} we have
$$
\pi^2\kappa_{m,n}\geq  \frac{4}{(S-n+1)(n+1)} \sum_{j=m-n}^{m}1 = \frac{4}{S-n+1} \geq \frac{4}{S+1}.
$$
This proves the claim.

We can now apply Lemma \ref{matrix-lemma-symmetric} item $(1)$ to extract information about the spectral gap of $\K_S$, that is, 
$$
\sg(\K_S)\geq \frac{4}{\pi^2}.
$$
By  Lemma \ref{matrix-lemma-symmetric} item $(2)$ we deduce that
\lcalc{\label{kappa-S+1-ineq}
|\kappa_{m,n}^{(\ell)}-1/(S+1)| \leq C\left(\frac{\pi^2-4}{\pi^2}\right)^{\ell}
}
for some constant $C$ which does not depend on $\ell$, where 
$$
\kappa_{m,n}^{(\ell)} = \int_0^\infty L_m(x)\K_S^{(\ell)}(L_n)(x)e^{-x}\dx
$$
are the coefficients of the $\ell$ power of matrix $\K_S$ associated with the $\ell$-fold composition $\K_S^{(\ell)}$.

\noindent {\bf Step 3.}  An important formula for Laguerre polynomials is the summation formula \cite[Formula 8.977-1]{GR}, which has the following identity as a particular case
\lcalc{\label{add-formula}
L^1_S(x + y) =  \sum_{n=0}^S L_n(x)L_{S-n}(y).
}
Plugging $y=0$ we also have $L^1_S(x) =  \sum_{n=0}^S L_n(x)$ . Now, for any $a,b\geq 0$ with $a+b=S$ let 
$$
L_{ab}(x/2) = \sum_{m=0}^S p_m(a,b)L_n(x),
$$
where $p_n(a,b)$ are the coefficients of the expansion of $L_{ab}(x/2)$ in terms of the Laguerre polynomials $L_n(x)$. We obtain that
\calc{
\sum_{m=0}^S p_m(a,b)=L_{ab}(0)=1.
}
We can now go back to the matrix $\Q_S^{\ell}$ and use identity \eqref{Q-powers-rep} to deduce that
\calc{
 \Q_S^{\ell+1}(a,b,c,d) -\frac{1}{S+1} & = \int_0^\infty L_{ab}(x/2)\K_S^{(\ell)}[L_{cd}(\cdot/2)](x)e^{-x}\dx -\frac{1}{S+1} \\ &= \sum_{n,m=0}^S p_m(a,b)p_n(c,d) \kappa_{m,n}^{(\ell)}  -\frac{1}{S+1}\\
 & = \sum_{n,m=0}^S p_m(a,b)p_n(c,d) \left[\kappa_{m,n}^{(\ell)}  -\frac{1}{S+1}\right]
}
if $a+b=c+d=S$. We can now apply inequality \eqref{kappa-S+1-ineq} to obtain that
$$
\left| Q_S^{\ell+1}(a,b,c,d) -\frac{1}{S+1}\right| \leq  C\left(\frac{\pi^2-4}{\pi^2}\right)^{\ell}
$$
for some constant $C$ independent of $\ell$. Finally, we can apply again Lemma \ref{matrix-lemma-symmetric} item $(2)$ to deduce that 
$$
\sg(\Q_S)\geq \frac{4}{\pi^2}.
$$
This proves the desired inequality \eqref{Q-spectral-ineq} and finishes the proof of the theorem.

\noindent {\bf Step 4.} We now turn our attention to the proof of Lemma \ref{crucil-rep-lemma}. The Poisson kernel associated with the Laguerre polynomials ${L_n(x)}$ is given by (see \cite[Formula 8.976-1]{GR})
\lcalc{\label{Poisson-kernel-Lag}
P(x,y;w) = \sum_{n\geq 0} L_n(x)L_n(y)w^n = \frac{e^{-\frac{(x+y)w}{1-w}}}{1-w}I_0\left(2\frac{\sqrt{wxy}}{1-w}\right)
}
and defined for all $0<w<1$, where the function $I_0$ above is the modified Bessel function of the first kind associated with parameter $\nu=0$ (see \cite[Section 8.4]{GR} for basic facts about Bessel functions). This is an important kernel since realizes the multiplication operator $L_n\mapsto w^n L_n$, that is,
$$
\int_0^\infty P(x,y;w)L_n(y)e^{-y}\dy = w^nL_n(x).
$$
Recall now the definition \eqref{KS-def} of $\K_S$ and its associated kernel $K_S(x,y)$. For $0<w<1$ we obtain
\calc{
G(x,y;w) := \sum_{S\geq 0} K_S(x,y)w^S & = \sum_{S\geq 0}\sum_{m+n=S} L_{m}(x/2)L_m(y/2)w^S \\
& =P(x/2,y/2;w)^2 \\
& = \frac{e^{-\frac{(x+y)w}{1-w}}}{(1-w)^2}I_0\left(\frac{\sqrt{wxy}}{1-w}\right)^2.
}
Given its form above, it is reasonable to expect that the kernel $G(x,y;w)$ behaves similarly to $P(x,y;w)$ and that is indeed the case. Using identity \cite[p.150(1)]{Wat} and the fact that $I_0(z)=J_0(iz)$ ($J_0(z)$ is the Bessel function of the first kind) we derive the following identity
$$
I_0(z)^2 = \fint_0^{\pi/2} I_0(2z\cos(t))\dt,
$$
where the integral sign above stands for $\frac{2}{\pi}\int_0^{\pi/2}$. We then obtain the following useful representation linking the kernels $G(x,y;w)$ and $P(x,y;w)$
\calc{
G(x,y;w)  &=  \fint_0^{\pi/2} \frac{e^{-\frac{(x+y)w}{1-w}}}{(1-w)^2} I_0\left(2\cos(t)\frac{\sqrt{wxy}}{1-w}\right)\dt \\
& =\fint_0^{\pi/2} \frac{e^{-\frac{xw\sin^2(t)}{1-w}}}{1-w} \frac{e^{-\frac{([x\cos^2(t)]+y)w}{1-w}}}{(1-w)} I_0\left(2\frac{\sqrt{w[x\cos^2(t)]y}}{1-w}\right)\dt \\
& = \fint_0^{\pi/2}\frac{e^{-\frac{xw\sin^2(t)}{1-w}}}{1-w} P(x\cos^2(t),y;w)\dt.
}
This implies that
\calc{
\sum_{S\geq 0} \K_S(L_n)(x) w^S & = \int_{0}^\infty G(x,y;w)L_n(y)e^{-y}\dy \\
& = \fint_0^{\pi/2}\frac{e^{-\frac{xw\sin^2(t)}{1-w}}}{1-w} \int_0^\infty P(x\cos^2(t),y;w)L_n(y)e^{-y}\dy\dt \\
& = w^n\fint_0^{\pi/2}\frac{e^{-\frac{xw\sin^2(t)}{1-w}}}{1-w}L_n(x\cos^2(t))\dt.
}
Using now the following generating function for the Laguerre polynomials $\{L_n(x)\}$ (which can be derived from \eqref{Poisson-kernel-Lag} by plugging $y=0$)
\lcalc{\label{gen-func-lag}
\sum_{\ell\geq 0} L_\ell(x)w^\ell = \frac{e^{-\frac{xw}{1-w}}}{1-w},
}
we finally obtain
\calc{
\sum_{S\geq 0} \K_S(L_n)(x) w^S & = \sum_{\ell\geq n} w^{\ell} \fint_0^{\pi/2} L_{\ell-n}(x\sin^2(t))L_n(x\cos^2(t))\dt.
}
Comparing the powers of $w$ in the above expression we conclude that
$$
\K_S(L_n)=0
$$
if $n>S$, which already shows identity \eqref{KS-zeros} of Lemma \ref{crucil-rep-lemma}, and that
\lcalc{\label{KS-int-id}
\K_S(L_n)(x) = \fint_0^{\pi/2} L_{S-n}(x\sin^2(t))L_n(x\cos^2(t))\dt.
}
if $0\leq n\leq S$.

\noindent {\bf Step 5.} We will now use identity \eqref{KS-int-id} to prove identity \eqref{coeff-K-S} of Lemma \ref{crucil-rep-lemma} and finish the proof. Define the following function
\lcalc{\label{T-id1}
T(w_1,w_2,w_3) = \sum_{a,b,c\geq 0} \fint_0^{\pi/2} \int_{0}^\infty L_{a}(x\sin^2(t))L_b(x\cos^2(t))L_c(x)e^{-x}\dx\dt \, w_1^aw_2^bw_3^c
}
for $0<w_1,w_2,w_3<1$. Using formula \eqref{gen-func-lag} we obtain that
\calc{
T&(w_1,w_2, w_3) \\ & = 
\frac{1}{(1-w_1)(1-w_2)(1-w_3)}\fint_0^{\pi/2} \int_0^\infty e^{-x\left[1+\frac{\sin^2(t)w_1}{1-w_2}+ \frac{\cos^2(t)w_2}{1-w_2}  + \frac{w_3}{1-w_3}\right]}\dx\dt \\ & =
\fint_0^{\pi/2} \left[1-(w_1+w_2w_3)\cos^2(t) - (w_2+w_1w_3)\sin^2(t)+w_1w_2w_3\right]^{-1}\dt \\
& = [1+w_1w_2w_3]^{-1}  \fint_0^{\pi/2}  \left[1-A\cos^2(t) - B\sin^2(t)\right]^{-1}\dt
}
where $A=\frac{w_1+w_2w_3}{1+w_1w_2w_3}$ and $B=\frac{w_2+w_1w_3}{1+w_1w_2w_3}$. The final integral above is doable via  a change the change of variables $s=\sqrt{\frac{1-A}{1-B}}\tan(t)$ and we obtain
\calc{
T(w_1,w_2, w_3) & =  [1+w_1w_2w_3]^{-1} [(1-A)(1-B)]^{-1/2} \\
& = [(1-w_1)(1-w_2w_3)(1-w_2)(1-w_1w_3)]^{-1/2} \\
& = \sum_{i,j,u,v\geq 0} \binom {2i} i \binom {2j} j \binom {2u} u \binom {2v} v \frac{w_1^i(w_2w_3)^vw_2^u(w_1w_3)^j}{4^{(i+j+u+v)}}  \\
& = \sum_{a,b,c\geq 0} \frac{w_1^aw_2^bw_3^c}{4^{a+b}} \sum_{\substack{i+j=a \\ u+v=b \\ j+v=c \\ i,j,u,v\geq 0}}\binom {2i} i \binom {2j} j \binom {2u} u \binom {2v} v,
}
where above we used the power series expansion $[1-w]^{-1/2} = \sum_{n\geq 0} 4^{-n}\binom {2n} n w^n$. Using \eqref{KS-int-id} and comparing the power series coefficients of $T$ in the above identity with definition \eqref{T-id1}, we conclude that identity \eqref{coeff-K-S} of Lemma \ref{crucil-rep-lemma} is true.

It remains to show that $[\kappa_{n,m}]_{n,m=0,...,S}$ is symmetric and doubly stochastic. The fact that $\K_S$ is self-adjoint (it is given by a real-valued kernel) clearly implies that the matrix $[\kappa_{n,m}]_{n,m=0,...,S}$ is symmetric. Using \eqref{KS-int-id} and \eqref{add-formula} we obtain
\calc{
\K_S(L_S^1)(x) & = \sum_{n=0}^S \K_S(L_n)(x) =  \sum_{n=0}^S \fint_0^{\pi/2} L_{S-n}(x\sin^2(t))L_n(x\cos^2(t))\dt\\
& = \fint_0^{\pi/2} L^1_{S}(x)\dt = L^1_{S}(x).
}
That is, $L_S^1(x)$ is the eigenfunction associated with the eigenvalue $1$. This implies that
\calc{
\sum_{n=0}^S \kappa_{m,n} & = \int_0^\infty L_m(x)\left[\sum_{n=0}^S \K_S(L_n)(x)\right]e^{-x}\dx =  \int_0^\infty L_m(x)L^1_{S}(x)e^{-x}\dx \\
& =  \int_0^\infty L^1_{S}(x)e^{-x}\dx +  \int_0^\infty [L_m(x)-1]L^1_{S}(x)e^{-x}\dx \\ &= 
 \sum_{n=0}^S \int_0^\infty L_n(x)e^{-x}\dx +  \int_0^\infty \frac{L_m(x)-1}{x}L^1_{S}(x)xe^{-x}\dx \\
& = 1 + 0,
}
where above we used that $L_S^1(x)$ is orthogonal, with respect to $xe^{-x}\dx$, to any polynomial with degree less than $S$ (recall that $L_m(0)=1$). This proves that $[\kappa_{n,m}]_{n,m=0,...,S}$ is also doubly stochastic and finishes the proof of the Lemma \ref{crucil-rep-lemma}.


\section{Concluding Remarks}\label{numerics}
\subsection{A combinatorial point of view}
``{\it Members of four different clubs, each wearing a hat with an insignia of his club, hang
their hats on entering the hall. When they leave there is a power failure and the departing
guests scramble for hats in the dark. Assuming the hats were picked at an entirely random
fashion, would you bet that the number of guests wearing hats with wrong insignias is
even}?''

This is a very nice extract from \cite{GZ}, where the authors continue the work initiated in \cite{AIK} and give a pure combinatorial proof of the following remarkable fact:
\calc{
&2^{a+b+c+d}\int_0^\infty L_{a}(x/2)L_{b}(x/2)L_{c}(x/2)L_{d}(x/2)e^{-x}\dx \\ & = \#\big\{\text{Events where we have an even number of guests with wrong hats}\big\} \\ & \ \ \ \  - \#\big\{\text{Events where we have an odd number of guests with wrong hats} \big\}  \\ & > 0,
}
where $a,b,c,d$ are respectively the number of members in each club. We conclude that is more likely to have an even number of guests wearing hats with wrong insignias. What is also a remarkable coincidence is that these same coefficients appear in the calculation of the $\|u\|_{L^4(\R^2\times \R)}$-norm for a solution of the Schr\"odinger equation $u(x,t)$ in two dimensions and that information about these coefficients can be translated into information about $u(x,t)$ (some more details in \cite[Appendix]{Gon}). Moreover, Conjecture \ref{Q-min-conj} points to a not yet known quantitative lower bound
\begin{align*}
& \big\{\text{Events where we have an even number of guests with wrong hats}\big\} \\ & \ \ \ \  - \#\big\{\text{Events where we have an odd number of guests with wrong hats} \big\} \\ & \geq \frac{2^{a+b+c+d+1}}{\pi(a+b+c+d+1)},
\end{align*}
if $a+b=c+d$, which is best possible asymptotically (besides the multiplying constant) if $d=0$ and $|a-b|\leq 1$.

\subsection{Numerical simulations} In this part we comment about numerical simulations done with the help of MATLAB \cite{matlab} and PARI/GP \cite{GP} and the conjectures they seem to indicate. 

First, one can simply plot representations of matrices $\Q_S$ in shades of gray, where larger entries of $\Q_S$ produce darker tones.  By inspection we find out that larger values accumulate at the diagonals of $\Q_S$ and smaller values in the mid rows and columns. This pattern is repeated in every single representation of $\Q_S$ we were able to compute and they directly point to the following conjecture, which we verified to hold for $S\leq 30$. 


\begin{conjecture}\label{Q-min-conj}
Let $a+b=c+d=S$. Then
$$
Q(a,b,c,d) \geq Q(\lfloor S/2 \rfloor, \lceil S/2 \rceil, S, 0).
$$
That is, the minimal element of $\Q_S$ lies in the first column with the middle row.
\end{conjecture}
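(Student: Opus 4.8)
\smallskip

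\noindent We sketch a possible route to Conjecture~\ref{Q-min-conj}. The key point is that $\Q_S$ is a Gram matrix carrying a built-in symmetry. Writing $\phi_a(x)=L_a(x/2)L_{S-a}(x/2)$, formula \eqref{matrix-rep-Q} reads $Q(a,S-a,c,S-c)=\langle\phi_a,\phi_c\rangle$, the inner product being taken in $L^2((0,\infty),e^{-x}\dx)$ --- which is why $\Q_S$ is positive semi-definite. By commutativity $\phi_a=\phi_{S-a}$, so rows $a$ and $S-a$ of $\Q_S$ coincide, and likewise columns $c$ and $S-c$; hence it is enough to understand the reduced array $R(i,j):=Q(i,S-i,j,S-j)$ on $\{0,1,\dots,M\}^2$, where $M:=\lfloor S/2\rfloor$. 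Since the integrand defining $Q$ is symmetric in its four arguments, $R(i,j)=R(j,i)$, and Conjecture~\ref{Q-min-conj} is exactly the assertion that $R$ attains its minimum at the corner $(i,j)=(0,M)$ (note $R(0,M)=Q(\lfloor S/2\rfloor,\lceil S/2\rceil,S,0)$). I would deduce this from two one-dimensional monotonicity facts: (i) for each fixed $j$, the map $i\mapsto R(i,j)$ is nondecreasing on $\{0,1,\dots,j\}$ --- ``$R$ grows toward the diagonal''; and (ii) $j\mapsto R(0,j)$ is nonincreasing on $\{0,1,\dots,M\}$. For $i\le j$, (i) gives $R(i,j)\ge R(0,j)$ and (ii) gives $R(0,j)\ge R(0,M)$, and the symmetry of $R$ handles $i>j$.

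Statement (ii) involves only a triple product, $R(0,j)=\int_0^\infty L_S(x/2)L_j(x/2)L_{S-j}(x/2)e^{-x}\dx$, to which the generating-function method of Section~\ref{proof} applies directly. Running the computation of Step~4 with three Laguerre factors gives
\lcalc{\label{triple-gf}
\sum_{p,q,r\ge0}\Big(\int_0^\infty L_p(x/2)L_q(x/2)L_r(x/2)e^{-x}\dx\Big)\,w_1^{p}w_2^{q}w_3^{r}=\frac{2}{\,2-w_1-w_2-w_3+w_1w_2w_3\,},
}
so $R(0,j)$ is the coefficient of $w_1^{S}w_2^{j}w_3^{S-j}$ on the right. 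Expanding $2/(2-e_1+e_3)$ --- with $e_1=w_1+w_2+w_3$ and $e_3=w_1w_2w_3$ --- as a geometric series in $\tfrac12(e_1-e_3)$ writes $R(0,j)$ as an explicit alternating sum of products of multinomial coefficients, after which one checks $R(0,j)-R(0,j+1)\ge0$ for $j<S/2$ by grouping terms (using $R(0,j)=R(0,S-j)$). An alternative: writing $\phi_j=\sum_{m=0}^S p_m(j,S-j)L_m$ and using $L_S(x/2)=2^{-S}\sum_m\binom Sm L_m$ gives $R(0,j)=2^{-S}\sum_{m=0}^S\binom Sm\,p_m(j,S-j)$, so (ii) follows once one knows --- as numerics strongly suggest --- that the coefficient vectors $(p_m(j,S-j))_m$ are nonnegative, symmetric about $m=S/2$, and concentrate more mass near $m=0$ and $m=S$ as $j$ increases on $\{0,1,\dots,M\}$, whence their pairing with the centrally peaked binomial weights $2^{-S}\binom Sm$ decreases.

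The heart of the matter is statement (i), which genuinely involves the four-fold integral and has no comparable closed form. I would recast it as the Tur\'an-type inequality $\langle\phi_i-\phi_{i-1},\phi_j\rangle\ge0$ for $1\le i\le j\le M$, that is,
\calc{
\int_0^\infty\Big[L_i(x/2)L_{S-i}(x/2)-L_{i-1}(x/2)L_{S-i+1}(x/2)\Big]\,L_j(x/2)L_{S-j}(x/2)\,e^{-x}\dx\ge0,
}
and attack it along one of two lines. Analytically: find a closed form for the discrete Wronskian $L_i(t)L_{S-i}(t)-L_{i-1}(t)L_{S-i+1}(t)$ from the contiguous relations for Laguerre polynomials (one expects a structured, low-degree expression), then pair it against $\phi_j$ via the four-factor analogue of \eqref{triple-gf}, namely $\sum_{a,b,c,d\ge0}Q(a,b,c,d)w_1^{a}w_2^{b}w_3^{c}w_4^{d}=2/(2-e_1+e_3-2e_4)$ with $e_k$ the elementary symmetric functions of $w_1,\dots,w_4$. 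Combinatorially: read $4^S R(i,j)$ through the signed ``wrong-hats'' count of \cite{AIK,GZ} and construct a sign-reversing involution witnessing the monotonicity as a single club member is transferred between the first two clubs. I expect statement (i) to be the main obstacle: every quantity in play is a difference of exponentially large integers, so the argument must be essentially exact, and the hat model --- so well adapted to proving positivity --- does not obviously see the ordering being asserted. Once (i) and (ii) are in hand, combining them with Lemma~\ref{matrix-lemma-symmetric}\,$(1)$ and the large-$S$ asymptotics of $R(0,M)$, extracted from \eqref{triple-gf}, would also give the quantitative lower bound on $\sg(\Q_S)$ --- hence on $\gamma$ --- mentioned right after the conjecture.
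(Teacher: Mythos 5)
The statement you are addressing is Conjecture~\ref{Q-min-conj}, which the paper does \emph{not} prove --- it is supported only by numerical verification for $S\leq 30$ --- so there is no argument in the paper to compare yours against. Your submission is likewise not a proof, and you say so, but the reduction you set up is worth checking on its own terms. The Gram-matrix interpretation $Q(a,S-a,c,S-c)=\langle\phi_a,\phi_c\rangle$ in $L^2((0,\infty),e^{-x}\dx)$ with $\phi_a=L_a(\cdot/2)L_{S-a}(\cdot/2)$ is correct (it is exactly what underlies the positive semi-definiteness asserted in Theorem~\ref{QS-op-thm}); the symmetry $\phi_a=\phi_{S-a}$ does collapse the problem to the array $R$ on $\{0,\dots,M\}^2$; the generating-function identities you invoke --- the trivariate $2/(2-e_1+e_3)$, the four-variable $2/(2-e_1+e_3-2e_4)$, and the connection formula $L_S(x/2)=2^{-S}\sum_m\binom Sm L_m(x)$ --- are all correct and follow by the same computation as Step~5 of Section~\ref{proof}; and, granting the two monotonicity statements (i) and (ii), the corner $R(0,M)=Q(\lfloor S/2\rfloor,\lceil S/2\rceil,S,0)$ is indeed the minimum via the chain $R(i,j)\geq R(0,j)\geq R(0,M)$ for $i\leq j$ together with $R(i,j)=R(j,i)$.

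What is missing is precisely the content of the conjecture: neither (i) nor (ii) is established. For (ii), the geometric expansion of $2/(2-e_1+e_3)$ produces an alternating sum whose sign you have not controlled, and the alternative route via $R(0,j)=2^{-S}\sum_m\binom Sm p_m(j,S-j)$ rests on an unproved (``numerics strongly suggest'') majorization-type property of the vectors $(p_m(j,S-j))_m$; note also that nonnegativity of these coefficients is itself not automatic, since Laguerre linearization coefficients can change sign, and here one of the three factors is $L_m(x)$ rather than $L_m(x/2)$, so the manifest positivity of the all-rescaled triple integral does not transfer without a separate argument. For (i), which you rightly call the heart of the matter, you offer two directions (a closed form for $L_iL_{S-i}-L_{i-1}L_{S-i+1}$ paired through the quadruple generating function; a sign-reversing involution in the weighted-permutation model of \cite{AIK,GZ}) but no argument at all. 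So what you have is a coherent reduction and a plausible research plan, not a proof; at least one of (i), (ii) would need to be actually established, and the conjecture remains open, exactly as in the paper.
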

\noindent It is a fun calculation (that we leave to the reader) using the generating function \eqref{gen-func-lag} that we have
$$
Q(a,b,a+b,0) = \frac{\binom {2a} a \binom {2b} b}{4^{a+b}} \geq \frac{2}{\pi(a+b+1)}.
$$
Thus, Conjecture \ref{Q-min-conj} in conjunction with Lemmas \ref{matrix-lemma-symmetric} and \ref{spectral-gap-lemma} would imply Theorem \ref{radial-thm-R4-gen} with $\gamma = 2/\pi$ (hence producing a better constant than $\gamma=4/\pi^2$). 

Secondly, one can try to compute eigenvalues. Numerical calculations of the eigenvalues of $\Q_S$  suggest the existence of a very structured relation between these matrices for different $S$'s. Let $Eig(\Q_S)$ denote the set of eigenvalues of $\Q_S$ and let $Eig(\Q)$ be the set of eigenvalues of the full operator $\Q$ defined in \eqref{Qfull-opt}. It is easy to see that $\cup_{S\geq 0} Eig(\Q_S)=Eig(\Q)$. However, numerical simulations point to the following conjecture.

\begin{conjecture}\label{Q-eigs}
Let $\lambda(n)=\binom {2n} n^2 16^{-n}$. Then
$$
Eig(\Q_S) = \{\lambda(0), \lambda(1),..., \lambda\left(\left\lfloor S/2\right\rfloor\right), 0\}.
$$
Moreover, each non-zero eigenvalue has multiplicity $1$ and the zero eigenvalue has multiplicity $\left\lceil S/2\right\rceil$.
\end{conjecture}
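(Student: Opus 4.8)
The plan is to reduce the spectral problem for $\Q_S$ to that of a symmetric matrix whose square is $\Q_S$, to identify that matrix through a Chebyshev generating function, and to diagonalize it with explicit generalized Laguerre eigenfunctions.

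Let $p_m(a,b)$ be the coefficient of $L_m(x)$ in $L_a(x/2)L_b(x/2)$ (as in Step~3 of the proof of Theorem~\ref{radial-thm-R4-gen}), and set $P_S=[p_m(a,S-a)]_{m,a=0}^{S}$. By orthonormality of the $L_m$ one has $Q(a,S-a,c,S-c)=\sum_m p_m(a,S-a)p_m(c,S-c)$, so $\Q_S=P_S^{t}P_S$. Moreover $P_S$ is symmetric: summing the Laguerre generating function gives
\[
\sum_{S\geq 0}\sum_{a=0}^{S}\sum_{m\geq 0} p_m(a,S-a)\,s^S u^a v^m=\Big(1-\tfrac{s(1+u)(1+v)}{2}+uvs^2\Big)^{-1},
\]
which is symmetric under $u\leftrightarrow v$, i.e.\ $p_m(a,S-a)=p_a(m,S-m)$. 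Hence $\Q_S=P_S^{2}$, and it suffices to prove that $P_S$ has eigenvalues $(-1)^n\beta_n$, $0\le n\le\lfloor S/2\rfloor$ (distinct in modulus), together with $0$ of multiplicity $\lceil S/2\rceil$, where $\beta_n=\binom{2n}{n}4^{-n}$; squaring then yields $\operatorname{Eig}(\Q_S)=\{\beta_n^2:0\le n\le\lfloor S/2\rfloor\}\cup\{0\}=\{\lambda(n):0\le n\le\lfloor S/2\rfloor\}\cup\{0\}$ with the asserted multiplicities, since $\cup_S\operatorname{Eig}(\Q_S)=\operatorname{Eig}(\Q)$.

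Viewed intrinsically, $P_S$ is the operator $L_a(x)\mapsto L_a(x/2)L_{S-a}(x/2)$, $0\le a\le S$, on polynomials of degree at most $S$, read back in the Laguerre basis. Since $L_a(x/2)L_{S-a}(x/2)=L_{S-a}(x/2)L_a(x/2)$, the vectors $L_a-L_{S-a}$ lie in the kernel, giving a $\lceil S/2\rceil$-dimensional kernel. For the nonzero eigenvalues I claim that for $0\le n\le\lfloor S/2\rfloor$,
\[
\Phi_n^{(S)}(x):=x^{2n}L_{S-2n}^{(4n+1)}(x)\qquad\text{satisfies}\qquad P_S\,\Phi_n^{(S)}=(-1)^n\beta_n\,\Phi_n^{(S)} .
\]
For $n=0$ this is the addition formula \eqref{add-formula}: $\Phi_0^{(S)}=\sum_{k\le S}L_k=L_S^{1}$ and $P_S\Phi_0^{(S)}(x)=\sum_k L_k(x/2)L_{S-k}(x/2)=L_S^{1}(x)$, eigenvalue $1=\beta_0$. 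For $S=2n$ one has $\Phi_n^{(2n)}(x)=x^{2n}$, and, expanding $x^{2n}=\sum_m(2n)!(-1)^m\binom{2n}{m}L_m(x)$ and cancelling all terms of degree $<2n$, the claim reduces to a binomial identity producing $\beta_n=\binom{2n}{n}4^{-n}$. In general, using $\sum_{m\ge0}L_m^{(4n+1)}(x)w^m=(1-w)^{-(4n+2)}e^{-xw/(1-w)}$, the generating function (in $w,v$) of the $L_m$-coefficients of $\Phi_n^{(S)}$ equals $(2n)!\,w^{2n}(1-v)^{2n}\big((1-w)(1-wv)\big)^{-(2n+1)}$, and feeding this into the product generating function of the $L_m(x/2)$'s should collapse $P_S\Phi_n^{(S)}=\sum_m(\text{coeff}_m)\,L_m(x/2)L_{S-m}(x/2)$ to $(-1)^n\beta_n\Phi_n^{(S)}$. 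Once the $\lfloor S/2\rfloor+1$ eigenvectors $\Phi_n^{(S)}$ with distinct nonzero eigenvalues are produced, $\operatorname{rank}P_S=\lfloor S/2\rfloor+1$ is forced, the multiplicity of the zero eigenvalue is exactly $\lceil S/2\rceil$, and the conjecture follows — and, as a byproduct, the products $L_a(x/2)L_{S-a}(x/2)$, $0\le a\le\lfloor S/2\rfloor$, are shown to be linearly independent.

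The hard part will be the general eigen-identity $P_S\Phi_n^{(S)}=(-1)^n\beta_n\Phi_n^{(S)}$ for $S>2n$: this is a weighted Laguerre addition formula, and carrying it out through the generating functions above requires a somewhat delicate diagonal extraction from a multivariate series. An alternative route, which bypasses the eigenvectors, is to compute all power traces of $P_S$: from $\sum_{a,m}p_m(a,S-a)u^a v^m=(uv)^{S/2}U_S\!\big(\tfrac{(1+u)(1+v)}{4\sqrt{uv}}\big)$ — equivalently, with $u=e^{2i\phi},v=e^{2i\chi}$, the Fourier symbol $e^{iS(\phi+\chi)}U_S(\cos\phi\cos\chi)$, $U_S$ the Chebyshev polynomial of the second kind — one gets for instance $\sum_{S\geq0}\operatorname{tr}(P_S)\,s^S=\big((1-s)\sqrt{1+s^2}\big)^{-1}=\sum_{S\geq0}\big(\sum_{n\le\lfloor S/2\rfloor}(-1)^n\beta_n\big)s^S$, confirming the first power sum, and the higher $\operatorname{tr}(P_S^{\ell})$ would follow by the analogous (again diagonal-extraction) computation. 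In either approach the essential content is a single hypergeometric/generating-function identity, with the remainder being bookkeeping and the dimension count above.
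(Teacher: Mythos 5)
This statement is a \emph{conjecture}: the paper offers no proof, only a numerical verification for $S\le 30$, so there is no argument of the paper's to compare yours against. What you have written is therefore not a redundant re-derivation but a proposed strategy, and it contains a genuine structural insight that the paper does not record: the factorization $\Q_S=P_S^{\mathrm t}P_S$ with $P_S=[p_m(a,S-a)]_{m,a}$, together with the observation that $P_S$ is symmetric (which your generating function
$\sum_{S,a,m}p_m(a,S-a)s^Su^av^m=\big(1-\tfrac{s(1+u)(1+v)}{2}+uvs^2\big)^{-1}$
does establish, since it is manifestly $u\leftrightarrow v$-symmetric, and its equivalence to the Chebyshev form $(uv)^{S/2}U_S\!\big(\tfrac{(1+u)(1+v)}{4\sqrt{uv}}\big)$ is correct). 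So $\Q_S=P_S^2$, and the problem is correctly reduced to showing that the real symmetric $(S+1)\times(S+1)$ matrix $P_S$ has spectrum $\{(-1)^n\beta_n:0\le n\le\lfloor S/2\rfloor\}\cup\{0\}$ with $\beta_n=\binom{2n}{n}4^{-n}$. The kernel count via $L_a-L_{S-a}$ is correct and the squaring bookkeeping (distinct moduli, dimension count) is correct. I also spot-checked your ansatz $\Phi_n^{(S)}(x)=x^{2n}L^{(4n+1)}_{S-2n}(x)$ for $(S,n)\in\{(2,1),(3,1)\}$ and it does give eigenvalue $-1/2$ as claimed.

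The gap, which you yourself flag, is that the central identity $P_S\Phi_n^{(S)}=(-1)^n\beta_n\Phi_n^{(S)}$ is never actually proved. You prove it for $n=0$ (the addition formula), and for $S=2n$ you only match the leading $x^{2n}$-coefficient via $\sum_m(-1)^m\binom{2n}{m}^2=(-1)^n\binom{2n}{n}$; the vanishing of all lower-degree coefficients is asserted, not shown. For general $S>2n$ you explicitly defer the argument (``a somewhat delicate diagonal extraction''), and the alternative route through power traces is carried out only for $\operatorname{tr}(P_S)$, with $\operatorname{tr}(P_S^\ell)$ for $\ell\ge2$ again deferred. Until either the eigen-identity or the full system of power sums is established, this remains a strategy, not a proof. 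That said, the reduction $\Q_S=P_S^2$ with an explicit symmetric square root and a concrete eigenvector candidate is a real advance over what the paper contains, and it localizes the conjecture to a single weighted Laguerre addition/linearization identity, which is exactly the kind of statement generating functions or known results on Laguerre product linearization coefficients might settle. I would encourage you to try to close it; even the $S=2n$ case cleanly (all coefficients, not just the top one) would already be worth writing up, as it pins down the extremal eigenvalue behind the sharp constant $\gamma=3/4$.
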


The Laguerre polynomials expand in monomials with rational coefficients whenever the parameter $\nu$ is rational, therefore using rational arithmetic one can compute the coefficients $Q(a,b,c,d)$ explicitly, which will consist of rational numbers. Thus, we can compute the characteristic polynomial $p_S(\lambda)$ of each $\Q_S$, which will then have only rational coefficients as well. Thus, we can precisely evaluate $p_S(\lambda(n))$ using rational arithmetic and verify that it vanishes at each $\lambda(0), \lambda(1),..., \lambda\left(\left\lfloor S/2\right\rfloor\right)$ with order $1$ and vanishes at $\lambda=0$ with order $\left\lceil S/2\right\rceil$. Using this procedure we confirmed the conjecture above for $S\leq 30$.   

One way of guessing this conjecture is by plotting the eigenvalues of, say, $\Q_{30}$ and realize that they decrease as $1/n$. Then plotting the difference of the reciprocals of the eigenvalues of $\Q_{30}$ we can clearly see they approximating $\pi$. This suggests that they have the following asymptotic approximation $1/(\pi n)$. However, using \eqref{stirlin-ineq} we can also try the approximation $\lambda(n)$ (since $\lambda(n)\sim 1/(\pi n)$). It turns out that this was so remarkably accurate that it could only be case that $\lambda(n)$ is the true value for these eigenvalues. In particular, Conjecture \ref{Q-eigs} would imply that $\sg(\Q_S)=\lambda(0)-\lambda(1)=3/4$ for all $S\geq 2$, and we would be able to use Lemma \ref{spectral-gap-lemma} to prove Theorem \ref{radial-thm-R4-gen} with $\gamma=3/4$, which is best possible.

\section*{Acknowledgments}
We acknowledge the support from the the StartUp funds from the Faculty of Sciences of the University of Alberta.




\begin{thebibliography}{99}


\bibitem{AIK}
R. Askey, M. Ismail, and T. Koorwinder,
\newblock  Weighted permutation problems and Laguerre polynomials,
\newblock Journal of Combinatorial Theory, Series A, 25(3)(1978), 277-287


\bibitem{Be}
W. Beckner, 
\newblock Inequalities in Fourier Analysis,
\newblock Annals of Mathematics, 102 (1975), 159-182.

\bibitem{BS}
N. Bez and M. Sugimoto, 
\newblock Optimal constants and extremizers for some smoothing
estimates, 
\newblock Journal d'Analyse Math\'ematique, vol. 131, issue 1, (2017), p. 159-187


\bibitem{BBCH}
J. Bennett, N. Bez, A. Carbery and D. Hundertmark,
\newblock Heat-Flow monotonicity of Strichartz norms,
\newblock Analysis and Partial Differential Equations 2(2)(2008), 147-158.

\bibitem{Ca}
E. Carneiro,
\newblock A sharp inequality for the Strichartz norm,
\newblock  Int. Math. Res. Notices 2009 (2009), 3127-3145.

\bibitem{CO}
E. Carneiro and D. Oliveira e Silva,
\newblock Some Sharp Restriction Inequalities on the Sphere,
\newblock Int. Math. Res. Not., vol 2015, issue 17, p. 8233-8267.

\bibitem{Jesus}
M. Christ,
\newblock A sharpened Hausdorff-Young inequality,
\newblock  	arXiv:1406.1210.

\bibitem{MC}
M. Christ,
\newblock On Nearly Radial Product Functions,
\newblock arXiv:1506.00155.

\bibitem{F}
D. Foschi,
\newblock Global maximizers for the sphere adjoint Fourier restriction inequality,
\newblock Journal of Functional Analysis, vol. 268, issue 3, (2015), p. 690-702.

\bibitem{Fo} 
D. Foschi, 
\newblock Maximizers for the Strichartz inequality, 
\newblock J. Eur. Math. Soc. 9 (2007), 739-774.

\bibitem{Gon}
F. Gon\c{c}alves,
\newblock Orthogonal polynomials and sharp estimates for the Schr\"odinger Equation,
\newblock Int. Math. Res. Not., vol. 2017, no. 00, p. 1-28.

\bibitem{GZ}
J. Gillis and D. Zeilberger,
\newblock A direct combinatorial proof of a positivity result,
\newblock Europ. J. Combinatorics 4 (1983), 221-223.

\bibitem{GR}
I. S. Gradshteyn and I. M. Ryzhik,
\emph{Table of integrals, series, and products.}
Translated from the Russian. Seventh edition. Elsevier/Academic Press, Amsterdam, (2007).
 
\bibitem{HZ} 
D. Hundertmark and V. Zharnitsky, 
\newblock On sharp Strichartz inequalities in low dimensions, 
\newblock Int. Math. Res. Not. (2006), 1-18.

\bibitem{matlab}
\newblock MATLAB 2017a, The MathWorks, Inc., Natick, Massachusetts, United States.

\bibitem{GP}
C. Batut, K. Belabas, D. Bernardi, H. Cohen and M. Olivier,
\newblock User's Guide to PARI-GP,
\newblock Laboratoire A2X, Universit\'e Bordeaux I, France, 1998.

\bibitem{Sz}
G. Szeg\"o,
\newblock {\it Orthogonal polynomials},
\newblock American Mathematical Society Colloquium Publications Volume XXIII, Fourth Edition, 1975.

\bibitem{Tao}
T. Tao,
\newblock {\it Nonlinear dispersive equations: Local and global analysis}, 
\newblock CBMS Regional Conference Series in Mathematics 106.

\bibitem{Wat} G. N. Watson,
\newblock{\it A Treatise on the theory of Bessel functions.}
\newblock Second Edition. Cambridge University Press, Cambridge (1966.)


\end{thebibliography}
\end{document}